\newcounter{examples}
\definecolor{ao(english)}{rgb}{0.0, 0.5, 0.0}
\newtheorem{theorem}{Theorem}
\newtheorem{lemma}[theorem]{Lemma}
\newtheorem{proposition}[theorem]{Proposition}
\newtheorem{corollary}[theorem]{Corollary}
\theoremstyle{definition}
\newtheorem{example}[examples]{Example}}
\begin{document}
\title{Orphans in Forests of Linear Fractional Transformations}
\author{Sandie Han, Ariane M. Masuda, Satyanand Singh, \\ and Johann Thiel}
\date{\today}
\address{Department of Mathematics, New York City College of Technology (CUNY), 300 Jay Street,
Brooklyn, New York 11201}
\email{\{shan,amasuda,ssingh,jthiel\}@citytech.cuny.edu}
%\thanks{The second and third authors were partially supported by PSC-CUNY grants (\#  67111-00 45 to Ariane Masuda, and \# 67136-00 45 to Satyanand Singh).}

%%%%%%%%%%%%%%%%%%%%%%%%%%%%%%%%%%%%%%%%%%%%%%%%%%

\begin{abstract}
The set of positive linear fractional transformations (PLFTs) is partitioned into an infinite forest of PLFT Calkin-Wilf-trees. The roots of these trees are called orphans.  In this paper, we provide a combinatorial formula for the number of orphan PLFTs with fixed determinant $D$. Then we provide a way of determining the orphan of a PLFT Calkin-Wilf-tree for a given PLFT.  In addition, we show that every positive complex number is the descendant of a complex $(u,v)$-orphan.
\end{abstract}
\maketitle

%%%%%%%%%%%%%%%%%%%%%%%%%%%%%%%%%%%%%%%%%%%%%%%%%%

\section{Introduction}

In~\cite{CW}, Calkin and Wilf introduced a rooted infinite binary tree where every vertex is labeled by a positive rational number according to the following rules:
\begin{enumerate}
    \item[(CW1)] the root is labeled $1/1$,
    \item[(CW2)] the left child of a vertex $a/b$ is labeled $a/(a+b)$, and
    \item[(CW3)] the right child of a vertex $a/b$ is labeled $(a+b)/b$.
\end{enumerate}
Figure~\ref{fig:CWtree1} shows the first five rows of this tree, known as the Calkin-Wilf tree.

\begin{figure}[ht!]
\begin{center}
\begin{tikzpicture}[sibling distance=8pt]
\tikzset{level distance=30pt}
\Tree[.$\frac{1}{1}$ [.$\frac{1}{2}$ [.$\frac{1}{3}$ [.$\frac{1}{4}$ $\frac{1}{5}$ $\frac{5}{4}$ ] [. $\frac{4}{3}$ $\frac{4}{7}$ $\frac{7}{3}$ ] ]
   [.$\frac{3}{2}$ [.$\frac{3}{5}$ $\frac{3}{8}$ $\frac{8}{5}$ ] [.$\frac{5}{2}$ $\frac{5}{7}$ $\frac{7}{2}$ ] ] ] [.$\frac{2}{1}$ [.$\frac{2}{3}$ [.$\frac{2}{5}$ $\frac{2}{7}$ $\frac{7}{5}$ ] [.$\frac{5}{3}$ $\frac{5}{8}$ $\frac{8}{3}$ ] ] [.$\frac{3}{1}$ [.$\frac{3}{4}$ $\frac{3}{7}$ $\frac{7}{4}$ ] [.$\frac{4}{1}$ $\frac{4}{5}$ $\frac{5}{1}$ ] ] ]]
\end{tikzpicture}
\end{center}
\caption{The first five rows of the Calkin-Wilf tree.}\label{fig:CWtree1}
\end{figure}
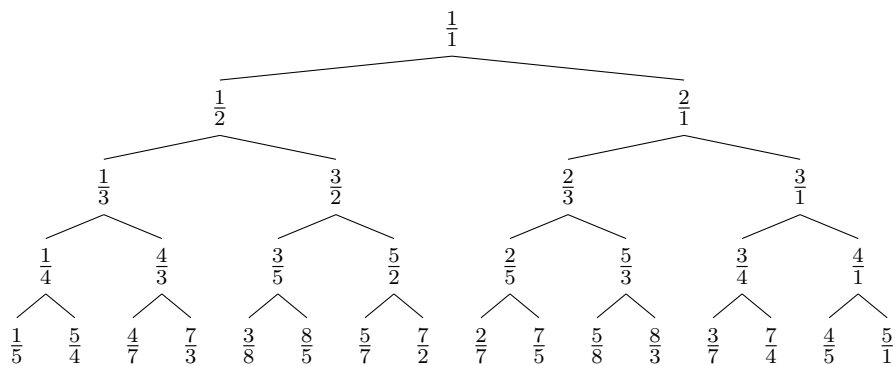

As noted by several authors~\cite{K,N1}, replacing $a/b$ in (CW2) and (CW3) above by the variable $z$ shows that the vertex labels of the Calkin-Wilf tree are generated by applying one of two transformations. For any vertex labeled $z$ in the Calkin-Wilf tree, the left child of $z$ is $L(z):=\frac{z}{z+1}$ and the right child of $z$ is $R(z):=z+1$. It is this observation that serves as the starting point of  a generalization of the Calkin-Wilf tree due to Nathanson~\cite{N1}.

By a positive linear fractional transformation (PLFT), we mean a function of the form
$$f(z) = \frac{az+b}{cz+d},$$ where $a, b, c$, and $d$ are nonnegative integers with $ad-bc\neq 0$. A special PLFT has the additional requirement that $ad-bc=1$. (Note that $L(z)$ and $R(z)$, the transformations used in connection to the Calkin-Wilf tree, are special PLFTs.)

Before moving forward, we mention some important facts regarding PLFTs that we will make use of repeatedly. Formal proofs of the following theorems can be found in ~\cite{N1}.

\begin{theorem}\label{lftmonoid}
The set of PLFTs forms a monoid under function composition. Furthermore, this monoid is isomorphic to $GL_2(\mathbb{N}_0)$ via the map $$\frac{az+b}{cz+d}\mapsto\begin{bmatrix}a & b\\ c & d\end{bmatrix}.$$
\end{theorem}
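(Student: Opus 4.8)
The plan is to prove the statement in two movements: first establish that the coefficient map $\phi\colon\frac{az+b}{cz+d}\mapsto\begin{bmatrix}a&b\\c&d\end{bmatrix}$ turns composition into matrix multiplication, and then use this to transfer the evident monoid structure of the matrices to the PLFTs. The heart of the argument is a single direct computation, so I would lead with that. Taking two arbitrary PLFTs $f(z)=\frac{a_1z+b_1}{c_1z+d_1}$ and $g(z)=\frac{a_2z+b_2}{c_2z+d_2}$, I would substitute $g$ into $f$ and clear the inner denominator to obtain
\[
(f\circ g)(z)=\frac{(a_1a_2+b_1c_2)z+(a_1b_2+b_1d_2)}{(c_1a_2+d_1c_2)z+(c_1b_2+d_1d_2)}.
\]
Reading off the coefficients shows that the matrix attached to $f\circ g$ is exactly the product $\begin{bmatrix}a_1&b_1\\c_1&d_1\end{bmatrix}\begin{bmatrix}a_2&b_2\\c_2&d_2\end{bmatrix}$, that is, $\phi(f\circ g)=\phi(f)\phi(g)$. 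Closure under composition then comes for free: every entry of the product is a sum of products of nonnegative integers, hence a nonnegative integer, and by multiplicativity of the determinant one has $\det\phi(f\circ g)=\det\phi(f)\cdot\det\phi(g)\neq 0$, since neither factor vanishes. Thus $f\circ g$ is again a PLFT.

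Next I would verify the remaining monoid axioms. Associativity is immediate, since function composition is always associative (equivalently, it transfers through $\phi$ from associativity of matrix multiplication). The identity is the map $z\mapsto z=\frac{1\cdot z+0}{0\cdot z+1}$, a PLFT with determinant $1$ whose matrix is $I$; it clearly acts as a two-sided identity under composition. Together with closure this establishes that the PLFTs form a monoid, with $\phi$ carrying its identity to the matrix identity.

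Finally I would upgrade $\phi$ to an isomorphism onto $GL_2(\mathbb{N}_0)$. Surjectivity is clear, as any matrix with nonnegative integer entries and nonzero determinant is the image of the PLFT it defines, and the homomorphism property is the computation above. The step that needs the most care, and the one I expect to be the main obstacle, is injectivity: one must know that a PLFT determines its coefficient data $(a,b,c,d)$ uniquely, which is exactly where the scaling ambiguity $\frac{az+b}{cz+d}=\frac{\lambda az+\lambda b}{\lambda cz+\lambda d}$ could intrude. I would handle this by appealing to the convention implicit in the definition, namely that a PLFT is recorded together with its integer coefficients, so that distinct matrices correspond to distinct PLFTs. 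With injectivity in hand, $\phi$ is a bijective homomorphism, hence a monoid isomorphism, completing the proof.
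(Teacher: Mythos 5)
The first thing to note is that the paper itself never proves this theorem: it is stated as background, with the remark that ``formal proofs of the following theorems can be found in~\cite{N1}.'' So there is no in-paper argument to compare yours against; what you wrote is the standard proof, and its computational core is correct. Substituting $g$ into $f$ and clearing the inner denominator does give
\[
(f\circ g)(z)=\frac{(a_1a_2+b_1c_2)z+(a_1b_2+b_1d_2)}{(c_1a_2+d_1c_2)z+(c_1b_2+d_1d_2)},
\]
which is exactly the assertion $\phi(f\circ g)=\phi(f)\,\phi(g)$; closure then follows from nonnegativity of the entries together with multiplicativity of the determinant, associativity is automatic for function composition, $z=\frac{1\cdot z+0}{0\cdot z+1}$ is the identity, and surjectivity is immediate from the definition of $GL_2(\mathbb{N}_0)$. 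All of that is fine.

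Where you need to be more careful is precisely the point you flagged: injectivity. Your resolution --- that ``a PLFT is recorded together with its integer coefficients'' --- is not a proof of injectivity; it is a redefinition of the objects. You should recognize, though, that it is also the \emph{only} possible resolution, because for honest functions the statement is false: the matrices $R_1$ and $2R_1=\begin{bmatrix}2&2\\0&2\end{bmatrix}$ both lie in $GL_2(\mathbb{N}_0)$ and induce the same function $z\mapsto z+1$, so $\phi$ is not even well defined on functions. In general, cross-multiplying and comparing polynomial coefficients shows that two coefficient matrices induce the same function exactly when they are proportional over the positive rationals, so the monoid of PLFT-functions is the quotient of $GL_2(\mathbb{N}_0)$ by proportionality; that quotient has trivial center (a central class must commute on the nose with every matrix, by a determinant argument, hence is a scalar class, and all scalar classes collapse to $[I]$), whereas the center of $GL_2(\mathbb{N}_0)$ is the infinite monoid $\{\lambda I:\lambda\geq 1\}$, so no cleverer choice of map could rescue the literal statement. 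Thus the theorem is true only under the convention that a PLFT is a formal expression --- equivalently, is identified with its coefficient matrix --- which is indeed how the present paper uses it (``we can associate a unique matrix in $GL_2(\mathbb{N}_0)$ with each PLFT''). Move that convention into the definition, before the proof starts; then injectivity is vacuous rather than stipulated mid-proof, and your argument is complete.
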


\begin{theorem}\label{lftmonoidfree}
The set of special PLFTs forms a free monoid of rank $2$, generated by $L(z)$ and $R(z)$, under function composition. Furthermore, the monoid is isomorphic to $SL_2(\mathbb{N}_0)$ via the map from Theorem~\ref{lftmonoid}.
\end{theorem}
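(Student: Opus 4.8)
My plan is to transfer the statement entirely to matrices via Theorem~\ref{lftmonoid} and then prove the purely matrix-theoretic fact that $\mathcal{L}=\begin{bmatrix}1&0\\1&1\end{bmatrix}$ and $\mathcal{R}=\begin{bmatrix}1&1\\0&1\end{bmatrix}$ (the images of $L$ and $R$) freely generate $SL_2(\mathbb{N}_0)$. The isomorphism assertion is nearly immediate: under the monoid isomorphism $\phi$ of Theorem~\ref{lftmonoid}, a PLFT is special exactly when its image has determinant $1$, i.e.\ lies in $SL_2(\mathbb{N}_0)$; since $\det$ is multiplicative, the special PLFTs form a submonoid and $\phi$ restricts to a monoid isomorphism onto $SL_2(\mathbb{N}_0)$. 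Because $L\mapsto\mathcal{L}$ and $R\mapsto\mathcal{R}$, it then suffices to show that every $M\in SL_2(\mathbb{N}_0)$ is \emph{uniquely} a product of $\mathcal{L}$'s and $\mathcal{R}$'s; this unique-word property is exactly what it means for the monoid to be free of rank $2$ on $\{\mathcal{L},\mathcal{R}\}$.

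To prove this I would set up a deterministic ``peeling'' algorithm. For $M=\begin{bmatrix}a&b\\c&d\end{bmatrix}$ one has $\mathcal{L}^{-1}M=\begin{bmatrix}a&b\\c-a&d-b\end{bmatrix}$ and $\mathcal{R}^{-1}M=\begin{bmatrix}a-c&b-d\\c&d\end{bmatrix}$, so $M=\mathcal{L}M'$ with $M'\in SL_2(\mathbb{N}_0)$ precisely when $c\ge a$ and $d\ge b$, while $M=\mathcal{R}M'$ with $M'\in SL_2(\mathbb{N}_0)$ precisely when $a\ge c$ and $b\ge d$. The crux is to show that for every $M\in SL_2(\mathbb{N}_0)$ other than the identity, exactly one of these two situations occurs. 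Comparing the rows $(a,b)$ and $(c,d)$ componentwise leaves four cases. If $(a,b)\ge(c,d)$ we peel $\mathcal{R}$, and if $(a,b)\le(c,d)$ we peel $\mathcal{L}$; these cannot both hold unless $a=c$ and $b=d$, which forces $ad-bc=0$, so at most one applies. The two ``mixed'' orderings are ruled out by the determinant: if $a<c$ and $b>d$ then $bc\ge(a+1)(d+1)>ad$, contradicting $ad-bc=1$; and if $a>c$ and $b<d$ then $ad\ge(c+1)(b+1)=bc+b+c+1$, giving $ad-bc\ge b+c+1$, which equals $1$ only when $b=c=0$, i.e.\ only for the identity.

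With the case analysis in hand, existence and uniqueness both follow quickly. Peeling a generator strictly decreases the entry sum $a+b+c+d$ (by $a+b\ge1$ or $c+d\ge1$, neither of which can vanish in an invertible matrix), so the process terminates, and since the identity is the only matrix from which nothing can be peeled, the algorithm reduces every $M$ to the identity; reading off the peeled generators expresses $M$ as a word in $\mathcal{L},\mathcal{R}$. Because at each stage exactly one generator is peelable, that word is forced at every step, which gives uniqueness and hence freeness of rank $2$. I expect the main obstacle to be packaging this case analysis cleanly: the determinant condition $ad-bc=1$ must be invoked in precisely the right way to simultaneously eliminate the mixed cases (which would break termination) and the coincidence $(a,b)=(c,d)$ (which would break determinism), and it is this single constraint that makes the peeling both well defined and unique.
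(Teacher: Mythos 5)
Your proof is correct. Note that the paper does not actually prove this theorem itself: it quotes the result and defers to Nathanson's paper \cite{N1} for a formal proof, so there is no internal argument to compare against. Your matrix ``peeling'' argument is, in substance, the standard proof (and essentially the one in \cite{N1}): the key dichotomy you establish---that a matrix in $SL_2(\mathbb{N}_0)$ other than the identity satisfies exactly one of $(a,b)\ge(c,d)$ or $(a,b)\le(c,d)$ componentwise---is precisely the statement that the only \emph{special} PLFT orphan is $z$, matching the orphan characterization ($a<c$, $b>d$ or $a>c$, $b<d$) quoted in the paper's introduction. Your determinant computations ruling out the two mixed orderings are right (in the case $a>c$, $b<d$ the bound $ad-bc\ge b+c+1$ forces $b=c=0$ and hence the identity), the entry-sum descent gives termination since $a+b=0$ or $c+d=0$ would kill the determinant, and determinism of the peeling at every step yields uniqueness of the word; together these give freeness of rank $2$, while the restriction of the isomorphism from Theorem~\ref{lftmonoid} to the determinant-one submonoid handles the final assertion.
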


Consider a rooted infinite binary tree  where every vertex is labeled
according to the following rules:
\begin{enumerate}
    \item[(P1)] the root is labeled by a PLFT $g(z)$,
    \item[(P2)] the left child of a vertex $f(z)$ is labeled $f(z)/(f(z)+1)$, and
    \item[(P3)] the right child of a vertex $f(z)$ is labeled $f(z)+1$.
\end{enumerate}
Note that Theorem~\ref{lftmonoid} ensures that the left child and right child of a PLFT $f(z)$ are also PLFTs. It quickly follows by induction that a tree generated using the above rules has all of its vertices labeled by a PLFT.

Such a tree will be referred to as a PLFT Calkin-Wilf tree (PLFT CW-tree) with root $g(z)$ and denoted by $\mathcal T(g(z))$. Figure~\ref{fig:LFTCWtree} shows the first four rows of $\mathcal T(z)$.

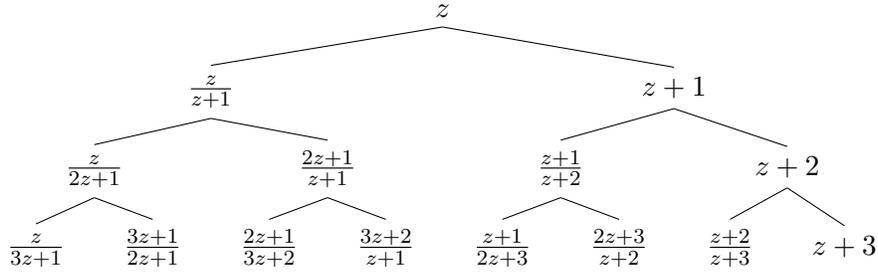
\begin{figure}[ht!]
\begin{center}
\begin{tikzpicture}[sibling distance=15pt]
\tikzset{level distance=30pt}
\Tree[.$z$ [.$\frac{z}{z+1}$ [.$\frac{z}{2z+1}$ $\frac{z}{3z+1}$ $\frac{3z+1}{2z+1}$ ]
   [.$\frac{2z+1}{z+1}$ $\frac{2z+1}{3z+2}$ $\frac{3z+2}{z+1}$ ] ] [.$z+1$ [.$\frac{z+1}{z+2}$ $\frac{z+1}{2z+3}$ $\frac{2z+3}{z+2}$ ]
   [.$z+2$ $\frac{z+2}{z+3}$ $z+3$ ] ]]
\end{tikzpicture}
\end{center}
\caption{The first four rows of $\mathcal T(z)$.}\label{fig:LFTCWtree}
\end{figure}

Theorem~\ref{lftmonoid} shows that we can associate a unique matrix in $GL_2(\mathbb{N}_0)$ with each PLFT in a natural way. Furthermore, the isomorphism between the two sets shows that we can compute the vertices of a PLFT CW-tree via matrix multiplication by the matrices $$L_1 :=\begin{bmatrix} 1 & 0\\ 1 & 1\end{bmatrix}\text{ and }R_1 :=\begin{bmatrix} 1 & 1\\ 0 & 1\end{bmatrix}.$$ Throughout the rest of this article, we will freely switch between either set, depending on the circumstances. As an example, Figure~\ref{fig:MLFTCWtree} shows the first four rows of the tree of matrices associated with $\mathcal T(z)$ (Figure~\ref{fig:LFTCWtree}).

\begin{figure}[ht!]
\begin{center}
\begin{tikzpicture}[every tree node/.style={font=\tiny,anchor=base}, sibling distance=10pt]
\tikzset{level distance=35pt}
\Tree[.${\begin{bmatrix} 1 & 0\\ 0 & 1\end{bmatrix}}$ [.${\begin{bmatrix} 1 & 0\\ 1 & 1\end{bmatrix}}$ [.${\begin{bmatrix} 1 & 0\\ 2 & 1\end{bmatrix}}$ ${\begin{bmatrix} 1 & 0\\ 3 & 1\end{bmatrix}}$ ${\begin{bmatrix} 3 & 1\\ 2 & 1\end{bmatrix}}$ ]
   [.${\begin{bmatrix} 2 & 1\\ 1 & 1\end{bmatrix}}$ ${\begin{bmatrix} 2 & 1\\ 3 & 2\end{bmatrix}}$ ${\begin{bmatrix} 3 & 2\\ 1 & 1\end{bmatrix}}$ ] ] [.${\begin{bmatrix} 1 & 1\\ 0 & 1\end{bmatrix}}$ [.${\begin{bmatrix} 1 & 1\\ 1 & 2\end{bmatrix}}$ ${\begin{bmatrix} 1 & 1\\ 2 & 3\end{bmatrix}}$ ${\begin{bmatrix} 2 & 3\\ 1 & 2\end{bmatrix}}$ ]
   [.${\begin{bmatrix} 1 & 2\\ 0 & 1\end{bmatrix}}$ ${\begin{bmatrix} 1 & 2\\ 1 & 3\end{bmatrix}}$ ${\begin{bmatrix} 1 & 3\\ 0 & 1\end{bmatrix}}$ ] ]]
\end{tikzpicture}
\end{center}
\caption{The first four rows of the matrix tree associated with $\mathcal T(z)$.}\label{fig:MLFTCWtree}
\end{figure}
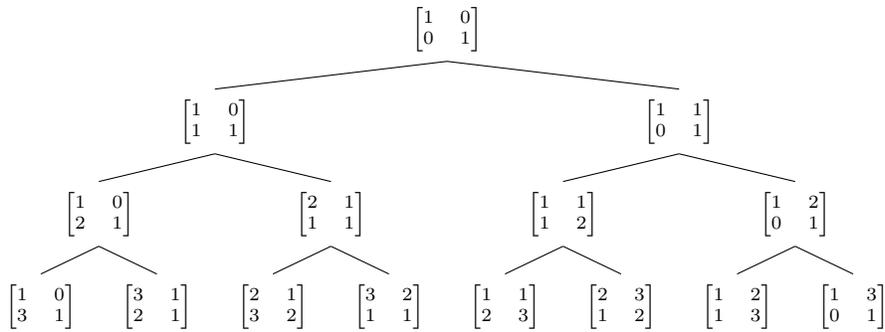

One remarkable property of the original Calkin-Wilf tree is that it produces an enumeration of the positive rationals~\cite{CW}. With the exception of the number 1 (the root), every positive rational number has a parent in this tree. While Theorem~\ref{lftmonoidfree} shows that a similar result holds for special PLFTs, this is not the case for the set of {\it all} PLFTs.

Luckly, not all is lost in this generalization. From~\cite{N1}, we find that the set of PLFTs is partitioned into an infinite forest of PLFT CW-trees. That is, each PLFT belongs to a unique such tree. The roots of these tress (which are not the children of any other PLFT) are called orphans and they are of the form $\frac{az+b}{cz+d}$ with either $a<c$ and $b>d$ or, alternatively, $a>c$ and $b<d$. The goal of this article is to further explore this set of orphans.

%%%%%%%%%%%%%%%%%%%%%%%%%%%%%%%%%%%%%%%%%%%%%%%%%%

%%%%%%%%%%%%%%%%%%%%%%%%%%%%%%%%%%%%%%%%%%%%%%%%%%

\section{The function $h(D)$}

As Nathanson~\cite[Theorem 7]{N3} showed, every PLFT CW-tree is rooted. In particular, every PLFT is the descendent of a unique orphan. Furthermore, while there are infinitely many such orphans, there are only finitely many with fixed determinant $D\neq 0$.

To this end, Nathanson~\cite{N3} defines the function $h(D)$ as the count of orphan PLFTs with determinant $D$ and computes the value of the function for $1\leq D\leq 15$ (see Figure~\ref{fig:hvals} and Figure~\ref{fig:hd}). (Note that $h(D)=h(-D)$, so we only consider positive values of $D$ from this point on.) Our goal in this section is to further explore some of the properties of $h(D)$.

\begin{figure}[ht!]
\centering
\begin{tabular}{|c||c|c|c|c|c|c|c|c|c|c|c|c|c|c|c|}
\hline
$D$ & 1 & 2 & 3 & 4 & 5 & 6 & 7 & 8 & 9 & 10 & 11 & 12 & 13 & 14 & 15\\
\hline
$h(D)$ & 1 & 4 & 7 & 13 & 15 & 26 & 25 & 39 & 40 & 54 & 49 & 79 & 63 & 88 & 88\\
\hline

\hline
\end{tabular}
  \caption{Values of $h(D)$ for $1\leq D\leq 15$.}
  \label{fig:hvals}
\end{figure}
We begin by showing that $h(D)$ is closely related to a partition function studied by Andrews~\cite{An}.

\begin{proposition}\label{hdvalue}
Let $\nu_2(D)$ denote the number of partitions of a positive integer $D$ using exactly two types of parts, $\sigma(D)$ denote the sum of divisors of $D$, and $\tau(D)$ denote the number of divisors of $D$. Then $$h(D)=\nu_2(D)+2\sigma(D)-\tau(D).$$
\end{proposition}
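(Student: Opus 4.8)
The plan is to turn the orphan condition into a Diophantine counting problem and then match each piece to the three terms on the right-hand side. Recall from the discussion preceding the statement that a PLFT $\frac{az+b}{cz+d}$ fails to have a parent exactly when its matrix cannot be written as $L_1M$ or $R_1M$ with $M\in GL_2(\mathbb N_0)$; unwinding this gives the two orphan types $a<c,\ b>d$ and $a>c,\ b<d$. A quick sign check shows that the first type forces $ad-bc<0$ and the second forces $ad-bc>0$, so for a fixed positive determinant $D$ only the type $a>c,\ b<d$ contributes. Thus $h(D)$ counts quadruples of nonnegative integers with $a>c$, $d>b$, and $ad-bc=D$. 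Writing $a=c+m$ and $d=b+n$ with $m,n\ge 1$, the determinant condition becomes
$$mn+mb+nc=D,$$
so $h(D)$ equals the number of solutions $(m,n,b,c)$ with $m,n\ge 1$ and $b,c\ge 0$.

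Next I would split this count according to whether $b$ and $c$ vanish. If $b=c=0$ the equation is $mn=D$, contributing $\tau(D)$ ordered factorizations. If exactly one of them vanishes, say $b=0<c$, the equation factors as $n(m+c)=D$ with $m,c\ge 1$, so summing $(D/n-1)$ over divisors $n\mid D$ gives $\sigma(D)-\tau(D)$; the symmetric case $c=0<b$ gives another $\sigma(D)-\tau(D)$. Together these three cases account for $\tau(D)+2\bigl(\sigma(D)-\tau(D)\bigr)=2\sigma(D)-\tau(D)$, which is precisely the non-$\nu_2$ part of the formula.

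It remains to show that the last case, $b,c\ge 1$, contributes exactly $\nu_2(D)$, and this is the step I expect to be the crux. Here I would exhibit an explicit bijection with partitions of $D$ into exactly two distinct part sizes. Such a partition is encoded by $(s,t,i,j)$ with $s>t\ge 1$ and multiplicities $i,j\ge 1$ satisfying $is+jt=D$. Given a solution $(m,n,b,c)$ with all four entries positive, rewrite $mn+mb+nc=m(n+b)+cn$ and set $s=n+b$, $t=n$, $i=m$, $j=c$; then $s>t\ge 1$, $i,j\ge 1$, and $is+jt=D$. The inverse sends $(s,t,i,j)$ to $(m,n,b,c)=(i,\,t,\,s-t,\,j)$, and the two maps are mutually inverse. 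The main point to verify carefully is that positivity of $b$ translates \emph{exactly} into the strict inequality $s>t$ (hence ``exactly two'' rather than ``at most two'' part sizes), and that no solution is double-counted across the four cases. Assembling the four contributions then yields $h(D)=\nu_2(D)+2\sigma(D)-\tau(D)$.
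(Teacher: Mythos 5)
Your proof is correct and takes essentially the same route as the paper: the paper likewise starts from the count of quadruples $(a,b,c,d)$ with $a>c$, $d>b$, $ad-bc=D$ (cited from Nathanson rather than re-derived), splits into the cases $b=c=0$, exactly one of $b,c$ zero, and $b,c\geq 1$ to obtain $\tau(D)+2(\sigma(D)-\tau(D))$, and handles the last case via the substitution $a=c+\epsilon_1$, $d=b+\epsilon_2$ and the resulting correspondence with partitions of $D$ into exactly two part sizes. The only cosmetic differences are that you justify the reduction to the orphan type $a>c$, $b<d$ yourself and spell out the bijection and its inverse explicitly, where the paper is terser.
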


\begin{proof} From~\cite{N3}, we have that
\begin{align}
h(D) & = \sum_{\substack{b,c\geq0\\ b+c< D}} \sum_{\substack{a>c\\d>b\\ad=D+bc}}1.\label{nsum}
\end{align}

We split the double sum in~\eqref{nsum} into three cases: $b,c\geq 1$, $b=0$ and $c\geq 1$, and $b=c=0$. Notice that we need not consider the case $c=0$ and $b\geq 1$ separately, as the count is identical to the case $b=0$ and $c\geq 1$. So
\begin{align}
h(D) & = \sum_{\substack{b,c\geq1\\ b+c< D}} \sum_{\substack{a>c\\d>b\\ad=D+bc}}1 + 2\sum_{c=1}^{D-1}\sum_{\substack{a>c\\a\mid D}}1+\tau(D)\notag\\
& = \sum_{\substack{b,c\geq1\\ b+c< D}} \sum_{\substack{a>c\\d>b\\ad=D+bc}}1 + 2(\sigma(D)-\tau(D))+\tau(D)\notag\\
& = \sum_{\substack{b,c\geq1\\ b+c< D}} \sum_{\substack{a>c\\d>b\\ad=D+bc}}1 + 2\sigma(D)-\tau(D).\label{dsumf}
\end{align}
It remains to show that the double sum in~\eqref{dsumf} is equal to $\nu_2(D)$. To do this, notice that if $b,c\geq 1$ with $a>c$ and $d>b$, then $a=c+\epsilon_1$ and $d=b+\epsilon_2$, where $\epsilon_1,\epsilon_2>0$. So $ad-bc=a\cdot\epsilon_2+\epsilon_1\cdot b=D$. Since $a>\epsilon_1$, we have that each term in the sum corresponds to a partition of $D$ into exactly two types of parts (the parts being $a$ and $\epsilon_1$). Likewise, it is now easy to see how to turn a partition of $D$ using exactly two types of parts into a set of values $a,b,c,d$ that satisfy the requirements of the sum. See Figure \ref{fig:pd} and~\cite{An} for a geometric interpretation of this part of the sum.
\end{proof}

\begin{figure}[ht!]
\centering
\begin{tikzpicture}[scale=1.1]
    %Draw axis
    \coordinate (y) at (0,5);
    \coordinate (x) at (5,0);
    \fill[gray!30] (0,1.5) -- (2,1.5) -- (2,0) -- (0,0) -- cycle;
    \fill[gray!50] (0,1.5) -- (0,3) -- (3,3) -- (3,0) -- (2,0) -- (2,1.5) -- cycle;
    \draw[<->] (y) -- (0,0) --  (x);
    \draw[thick] (0,4) -- (4,0);
    \draw (-0.15,1.5) node [left] {$b$} -- (2,1.5) -- (2,-0.15) node [below] {$c$};
    \draw (-0.15,3) node [left] {$d$} -- (3,3) -- (3,-0.15) node [below] {$a$};
    \draw (-0.15,0) node [left] {$0$} -- (0,0);
    \draw (0,-0.15) node [below] {$0$} -- (0,0);
    \draw (-0.15,4) node [left] {$D-1$} -- (0,4);
    \draw (4,-0.15) node [below] {$D-1$} -- (4,0) ;
    \draw[dashed] (2,1.5) -- (3,1.5);
    \draw [decorate,decoration={brace,amplitude=5pt},xshift=-4pt,yshift=0pt] (-0.15,1.7) -- (-0.15,2.8) node [black,midway,xshift=-0.4cm] {$\epsilon_2$};
    \draw [decorate,decoration={brace,amplitude=5pt,mirror},xshift=0pt,yshift=-4pt] (2.2,-0.15) -- (2.8,-0.15) node [black,midway,yshift=-0.4cm] {$\epsilon_1$};
    \draw (2,2.5) to [out=90,in=180] (3,3.2)
to [out=0,in=240] (4,3.5) node [above] {$D$ (area)};
  \end{tikzpicture}
  \caption{Geometric representation of $\nu_2(D)$ term in $h(D)$.}
  \label{fig:pd}
\end{figure}
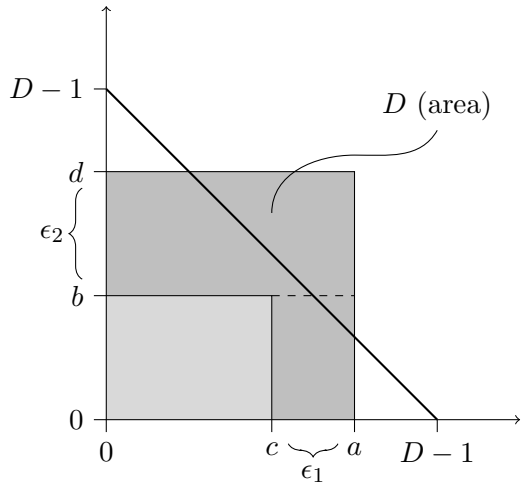

As a consequence of results of Ingham~\cite{In}, Estermann~\cite{Es}, and MacMahon~\cite{Ma}, we have the asymptotic behavior for $\nu_2(D)$, namely
\begin{align}
\nu_2(D) &\sim \frac{3}{\pi^2}\sigma(D)(\log{D})^2\label{v2asym}
\end{align}
as $D\to\infty$. From Proposition~\ref{hdvalue}, it follows that $h(D)$ has the same asymptotic behavior. Furthermore, from~\eqref{v2asym}, we can compute the summatory function of $h(D)$ in terms of a ``nicer" function that does not involve $\sigma(D)$. In particular, we get the following result (see Figure~\ref{fig:sumhd} and Figure~\ref{fig:ratiohd}).

Let $f(x)$ and $g(x)$ be functions. By $f(x)=O(g(x))$, we mean that there exists a constant $c$ such that $|f(x)|\leq c|g(x)|$ for all sufficiently large $x$.

\begin{proposition}\label{hdavg}
For large $x$, $$\sum_{D\leq x} h(D) = \frac{1}{4}x^2\log^2{x}+O(x^2\log{x}).$$
\end{proposition}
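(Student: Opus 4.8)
The plan is to start from the decomposition $h(D)=\nu_2(D)+2\sigma(D)-\tau(D)$ given by Proposition~\ref{hdvalue}, sum it over $D\le x$, and treat the three pieces separately. The two arithmetic pieces are harmless: the classical average orders $\sum_{D\le x}\sigma(D)=\frac{\pi^2}{12}x^2+O(x\log x)$ and $\sum_{D\le x}\tau(D)=x\log x+O(x)$ give $2\sum_{D\le x}\sigma(D)=O(x^2)$ and $\sum_{D\le x}\tau(D)=O(x\log x)$, both of which are absorbed into the target error $O(x^2\log x)$. Hence the statement reduces to proving
\[
\sum_{D\le x}\nu_2(D)=\tfrac14 x^2\log^2 x+O(x^2\log x).
\]

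\emph{A first approach} follows the narrative around~\eqref{v2asym}. Since $\nu_2(D)\sim\frac{3}{\pi^2}\sigma(D)\log^2 D$, I would feed the precise partial sums $\Sigma(t):=\sum_{D\le t}\sigma(D)=\frac{\pi^2}{12}t^2+O(t\log t)$ into Abel summation against the smooth weight $\log^2 t$, whose derivative is $2\log t/t$. This gives
\[
\frac{3}{\pi^2}\sum_{D\le x}\sigma(D)\log^2 D=\frac{3}{\pi^2}\left(\Sigma(x)\log^2 x-\int_1^x\Sigma(t)\frac{2\log t}{t}\,dt\right)=\frac14 x^2\log^2 x+O(x^2\log x),
\]
where the leading term $\frac{\pi^2}{12}x^2\log^2 x$ of $\Sigma(x)\log^2 x$ survives (scaled by $3/\pi^2$), while every remaining contribution, in particular $\frac{\pi^2}{6}\int_1^x t\log t\,dt=\frac{\pi^2}{12}x^2\log x+O(x^2)$, is $O(x^2\log x)$.

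The hard part is precisely the error control in this first approach. The pointwise relation~\eqref{v2asym} only asserts $\nu_2(D)=\frac{3}{\pi^2}\sigma(D)\log^2 D\,(1+o(1))$, and summing the $o(1)$ against $\sigma(D)\log^2 D$ yields only $o(x^2\log^2 x)$, which is too weak for an $O(x^2\log x)$ error. Closing this gap requires the stronger averaged estimate underlying the results of Ingham, Estermann, and MacMahon cited for~\eqref{v2asym}, namely that $\sum_{D\le x}\big(\nu_2(D)-\frac{3}{\pi^2}\sigma(D)\log^2 D\big)=O(x^2\log x)$.

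\emph{A second, self-contained approach} avoids this issue by counting $\sum_{D\le x}\nu_2(D)$ directly. Reading the bijection in the proof of Proposition~\ref{hdvalue} in reverse, $\nu_2(D)$ is the number of quadruples $(s,t,k,\ell)$ with $s>t\ge1$, $k,\ell\ge1$, and $ks+\ell t=D$, so
\[
\sum_{D\le x}\nu_2(D)=\#\{(s,t,k,\ell):s>t\ge1,\ k,\ell\ge1,\ ks+\ell t\le x\}.
\]
For fixed $s>t$ the inner count of pairs $(k,\ell)$ equals the number of lattice points in the triangle $ks+\ell t\le x$, $k,\ell\ge1$, which is $\frac{x^2}{2st}+O\!\left(\frac{x}{s}+\frac{x}{t}\right)$. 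The main term sums to $\frac{x^2}{2}\sum_{s>t,\,s+t\le x}\frac{1}{st}$, and by symmetry $\sum_{s>t,\,s+t\le x}\frac{1}{st}=\frac12\sum_{s,t\ge1,\,s+t\le x}\frac{1}{st}+O(1)=\frac12\log^2 x+O(\log x)$, giving the main term $\frac14 x^2\log^2 x$. It remains to check that the lattice-point errors $\sum_{s>t,\,s+t\le x}\left(\frac{x}{s}+\frac{x}{t}\right)$ and the truncation of the double sum are both $O(x^2\log x)$; this uniform error bookkeeping is the step I expect to demand the most care.
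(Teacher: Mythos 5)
Your second approach is correct, and while it lives in the same elementary lattice-point-counting world as the paper's proof, the route is genuinely different and arguably cleaner. The paper never passes back through the partition interpretation: it sums the orphan-count formula \eqref{dsumf} over $D\le x$ and evaluates $\Sigma=\sum_{b,c\ge 1}\sum_{a>c,\,d>b,\,ad\le x+bc}1$ in the original coordinates $(a,b,c,d)$, where the constraint $ad\le x+bc$ is bilinear; this forces delicate range bookkeeping (e.g.\ $b\le\frac{x-a}{a-c}$) and culminates in Lemma~\ref{auxlemma}, $\sum_{1\le c\le x-1}\sum_{c<a\le x}\frac{1}{a(a-c)}=\frac12\log^2 x+O(\log x)$, proved by partial fractions and partial summation. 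Your substitution $(s,t,k,\ell)=(a,\,a-c,\,d-b,\,b)$ linearizes the constraint to $ks+\ell t\le x$, so the inner count is a plain triangle count $\frac{x^2}{2st}+O\big(\frac{x}{s}+\frac{x}{t}\big)$ and the outer sum is the symmetric $\sum_{s+t\le x}\frac{1}{st}=\log^2 x+O(\log x)$, which is an easier cousin of the paper's lemma (putting $t=a-c$ in Lemma~\ref{auxlemma} turns it into $\sum_{1\le t<a\le x}\frac{1}{at}$, and its evaluation uses the same control of $\log(1-t/x)$ that the paper applies after its partial-fraction step, so your route simply skips that step). The bookkeeping you deferred does close: for fixed $t$ there are at most $x$ admissible $s$, so $\sum_{s>t,\,s+t\le x}\frac{x}{t}\le x\sum_{t\le x/2}\frac{x}{t}=O(x^2\log x)$, while for fixed $s$ there are at most $s$ admissible $t$, so $\sum_{s>t,\,s+t\le x}\frac{x}{s}\le\sum_{s\le x}\frac{x}{s}\cdot s=O(x^2)$; and there is no truncation error at all, since the inner count vanishes when $s+t>x$. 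Your treatment of $2\sigma(D)-\tau(D)$ matches the paper's (Apostol's average orders). Finally, your diagnosis of the first approach is exactly right and mirrors the paper's own stance: the pointwise asymptotic \eqref{v2asym} alone yields only an $o(x^2\log^2 x)$ error after summation, which is precisely why the paper advertises its proof as independent of \eqref{v2asym}.
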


We give an independent proof of Proposition~\ref{hdavg} using elementary methods that do not require prior knowledge of~\eqref{v2asym}. Before we begin the proof of Proposition~\ref{hdavg}, we make note of a useful lemma.

\begin{lemma}\label{auxlemma}
For large $x$,
\begin{align}
\sum_{1\leq c\leq x-1}\sum_{c<a\leq x}\frac{1}{a(a-c)}=\frac{1}{2}\log^2{x}+O(\log{x}).\label{suplemma}
\end{align}
\end{lemma}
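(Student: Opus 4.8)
The plan is to collapse the inner sum into a harmonic number by reversing the order of summation, and then estimate the resulting single sum by comparison with an integral.

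First I would interchange the roles of the two summation variables. The region of summation $\{(c,a) : 1\le c\le x-1,\ c<a\le x\}$ is identical to $\{(c,a) : 2\le a\le x,\ 1\le c\le a-1\}$, the constraint $c\le x-1$ being automatic once $c\le a-1\le x-1$. Pulling the factor $1/a$ outside and substituting $j=a-c$ turns the inner sum into a partial harmonic sum:
\[
\sum_{1\le c\le x-1}\sum_{c<a\le x}\frac{1}{a(a-c)}=\sum_{2\le a\le x}\frac{1}{a}\sum_{j=1}^{a-1}\frac{1}{j}=\sum_{2\le a\le x}\frac{H_{a-1}}{a},
\]
where $H_n=\sum_{j=1}^{n}1/j$ denotes the $n$-th harmonic number.

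Next I would insert the standard estimate $H_{a-1}=\log a+O(1)$, valid uniformly for $a\ge 2$. This splits the sum into a main term $\sum_{2\le a\le x}(\log a)/a$ and an error term $\sum_{2\le a\le x}O(1/a)=O(\log x)$. To handle the main term by elementary means (avoiding~\eqref{v2asym} and the Stieltjes constants), I would compare it with $\int_1^x \frac{\log t}{t}\,dt=\frac{1}{2}\log^2 x$. Since $(\log t)/t$ is eventually decreasing, the difference between the sum and the integral is bounded by a constant, giving $\sum_{2\le a\le x}(\log a)/a=\frac{1}{2}\log^2 x+O(1)$. Combining the two pieces yields the claimed value $\frac{1}{2}\log^2 x+O(\log x)$.

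The computations are entirely routine, and there is no real obstacle to speak of; the only genuine idea is the reversal of summation order that exposes the harmonic number, after which everything reduces to a single well-understood sum. The one point requiring a modicum of care is the error bookkeeping, but since we seek only the leading term with an $O(\log x)$ error, the crude estimate $H_{a-1}=\log a+O(1)$ is more than sufficient and there is no need to track $\gamma$ or any finer constants.
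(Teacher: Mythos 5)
Your proof is correct, but it follows a genuinely different route from the paper's. You interchange the order of summation at the outset, so that the inner sum collapses to a harmonic number and the double sum becomes $\sum_{2\le a\le x}H_{a-1}/a$; the paper instead keeps the original order, applies the partial fraction decomposition $\frac{1}{a(a-c)}=\frac{1}{c(a-c)}-\frac{1}{ca}$, invokes the harmonic asymptotic $\sum_{n\le x}\frac1n=\log x+C+O(1/x)$ several times, and must then control the near-cancelling combination $\log(x-c)-\log x+\log c$, in particular bounding $\sum_{c}\frac1c\log\bigl(1-\frac{c}{x}\bigr)$ --- the one delicate step in that argument (which the paper justifies by calling the Taylor series of $\log(1-t)$ alternating; it is not alternating for $0<t<1$, though the needed $O(\log x)$ bound survives a correct splitting of the range of $c$). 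Both arguments funnel into the same single sum $\sum_{n\le x}(\log n)/n$: the paper evaluates it by partial summation, while you compare it directly with $\int_1^x(\log t)/t\,dt=\frac12\log^2 x$ using the eventual monotonicity of $(\log t)/t$. Your arrangement is the cleaner one for this lemma in isolation: the exchange of summation exposes $H_{a-1}$ immediately, every error term you meet is trivially $O(\log x)$, and you avoid both the partial-fraction step and the $\log\bigl(1-\frac{c}{x}\bigr)$ estimate. What the paper's arrangement buys is uniformity with the surrounding computation in Proposition~\ref{hdavg}, where the same toolkit of harmonic asymptotics and partial summation is reused on sums whose region of summation cannot simply be swapped; as a standalone proof of Lemma~\ref{auxlemma}, however, your argument is complete and somewhat simpler.
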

\begin{proof}

By partial fraction decomposition,
\begin{align*}
&\sum_{1\leq c\leq x-1}\sum_{c<a\leq x}\frac{1}{a(a-c)}  = \sum_{1\leq c\leq x-1}\sum_{c<a\leq x} \frac{1}{c(a-c)}-\frac{1}{ca}\\
 =& \sum_{1\leq c\leq x-1}\frac{1}{c}\Big(\log{(x-c)}+C+O\Big(\frac{1}{x-c}\Big)-\log{x}+\log{c}+O\Big(\frac{1}{c}\Big)\Big).\\
\end{align*}
The second line above follows from repeatedly applying the following well-known asymptotic formula for the harmonic series~\cite[Theorem 3.2]{Ap}
\begin{align}
\sum_{n\leq x}\frac{1}{n} &= \log{x}+C+O\Big(\frac{1}{x}\Big).\label{harm}
\end{align} (Note that $C$ is actually the Euler-–Mascheroni constant $\gamma$, however we will not need to know this for our particular application.) It follows that
\begin{align*}
\sum_{1\leq c\leq x-1}\sum_{c<a\leq x}\frac{1}{a(a-c)}& = \sum_{1\leq c\leq x-1}\frac{1}{c}\Big(\log{(x-c)}-\log{x}+\log{c}\Big)+O(\log{x})\\
& = \sum_{1\leq c\leq x-1}\Big(\frac{\log{c}}{c}+\frac{1}{c}\log{\Big(1-\frac{c}{x}\Big)}\Big)+O(\log{x}).
\end{align*}
Using the (alternating) Taylor series for $\log{(1-x)}$ for $|x|<1$, we get that $\big|\log{\big(1-\frac{c}{x}}\big)\big|<\frac{c}{x}.$ So
\begin{align*}
\sum_{1\leq c\leq x-1}\sum_{c<a\leq x}\frac{1}{a(a-c)} & =  \sum_{1\leq c\leq x-1}\frac{\log{c}}{c}+O(\log{x}).
\end{align*}
By partial summation
\begin{align*}
\sum_{1\leq c\leq x}\frac{\log{c}}{c} & = \frac{\log{x}}{x}(x+O(1))-\int_1^{x}(t+O(1))\Big(\frac{1}{t^2}-\frac{\log{t}}{t^2}\Big)\;dt\\
& = \int_1^{x}\frac{\log{t}}{t}\;dt+O(\log{x}) = \frac{1}{2}\log^2{x}+O(\log{x}),
\end{align*}
from which the desired result follows.
\end{proof}

\begin{proof}[Proof of Proposition~\ref{hdavg}] From~\eqref{dsumf}, it follows that
\begin{align}
\sum_{D\leq x} h(D) & = \sum_{D\leq x} \sum_{\substack{b,c\geq1\\ b+c< D}} \sum_{\substack{a>c\\d>b\\ad=D+bc}}1 + \sum_{D\leq x}(2\sigma(D)-\tau(D)).\label{startsum}
\end{align} Using~\cite[Theorem 3.3]{Ap} and~\cite[Theorem 3.4]{Ap}, we see that the contribution from rightmost sum in~\eqref{startsum} is $O(x^2\log{x})$.

Now let $$\Sigma = \sum_{D\leq x} \sum_{\substack{b,c\geq1\\ b+c< D}} \sum_{\substack{a>c\\d>b\\ad=D+bc}}1.$$ By rearranging the terms of the sum in $\Sigma$, we get that
\begin{align}
\Sigma & = \sum_{\substack{b\geq 1\\c\geq 1\\ b+c< x}} \sum_{\substack{a>c\\d>b\\ad\leq x+bc}}1\notag\\
& = \sum_{1\leq c\leq x-1}\sum_{1\leq b\leq x-1-c}\sum_{c<a\leq x}\sum_{b< d\leq \frac{x+bc}{a}}1\label{rearr1}\\
& = \sum_{2\leq c\leq x-1}\sum_{c<a\leq x}\sum_{0\leq b\leq x-1-c}\sum_{b< d\leq \frac{x+bc}{a}}1\label{rearr2}\\
& = \sum_{2\leq c\leq x-1}\sum_{c<a\leq x}\sum_{0\leq b\leq \frac{x-a}{a-c}}\sum_{b< d\leq \frac{x+bc}{a}}1.\label{rearr3}
\end{align}
Notice that the upper bound on the sum of $a$ in~\eqref{rearr1} can be restricted to values less than or equal to $x$ because otherwise, $ad>xd\geq x(1+b) = x+bx> x+bc$, a contradiction. We also have that~\eqref{rearr2} follows from the fact that the sums over $a$ and $b$ are independent of each other. Lastly,~\eqref{rearr3} follows from the fact that $b\leq \frac{x-a}{a-c}$ (otherwise we have a similar contradiction as above) and $\frac{x-a}{a-c}=\frac{x-c}{a-c}-1\leq x-1-c$. So
\begin{align*}
\Sigma & = \sum_{1\leq c\leq x-1}\sum_{c<a\leq x}\sum_{1\leq b\leq \frac{x-a}{a-c}}\Big(\frac{x+bc}{a}-b+O(1)\Big)\\
& = \sum_{1\leq c\leq x-1}\sum_{c<a\leq x}\sum_{1\leq b\leq \frac{x-a}{a-c}}\Big(\frac{x}{a}-\Big(\frac{c}{a}-1\Big)b+O(1)\Big)\\
& = \sum_{1\leq c\leq x-1}\sum_{c<a\leq x}\Big(\frac{x}{a}\Big(\frac{x-c}{a-c}+O(1)\Big)+\Big(\frac{c}{a}-1\Big)\Big(\frac{(x-a)^2}{2(a-c)^2}+O\Big(\frac{x-a}{a-c}\Big)\Big)\\
       & \qquad\qquad+O\Big(\frac{x-a}{a-c}\Big)\Big),
\end{align*}
where the last equality follows from the well-known formula for the sum of consecutive natural numbers. Now, using some basic algebraic manipulations and~\eqref{harm} once again,
\begin{align*}
\Sigma  &= \sum_{1\leq c\leq x-1}\sum_{c<a\leq x}\frac{x(x-c)}{a(a-c)}-\frac{(x-a)^2}{2a(a-c)}+O\Big(\frac{x}{a-c}\Big)\\
 &=  \sum_{1\leq c\leq x-1}\sum_{c<a\leq x}\frac{x^2}{2a(a-c)}-\frac{cx}{(a-c)}+O\Big(\frac{x}{a-c}\Big)\\
 &=  \frac{1}{2}x^2\sum_{1\leq c\leq x-1}\sum_{c<a\leq x}\frac{1}{a(a-c)}-x\sum_{1\leq c\leq x-1}\sum_{c<a\leq x}\frac{c}{(a-c)}+O(x^2\log{x})\\
 &=  \frac{1}{2}x^2\sum_{1\leq c\leq x-1}\sum_{c<a\leq x}\frac{1}{a(a-c)}+O(x^2\log{x}).
\end{align*}
The result then follows from Lemma~\ref{auxlemma}.

\begin{figure}
        \centering
        \begin{subfigure}[b]{0.5\textwidth}
                \includegraphics[width=\textwidth]{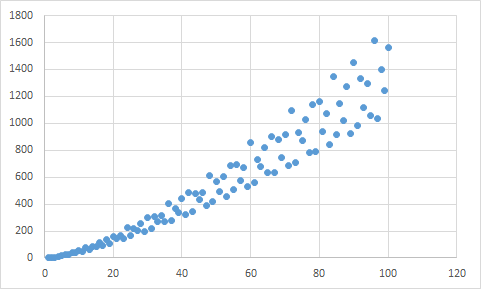}
                \caption{Plot of $h(D)$.}
                \label{fig:hd}
        \end{subfigure}%
            \qquad
        \begin{subfigure}[b]{0.5\textwidth}
                \includegraphics[width=\textwidth]{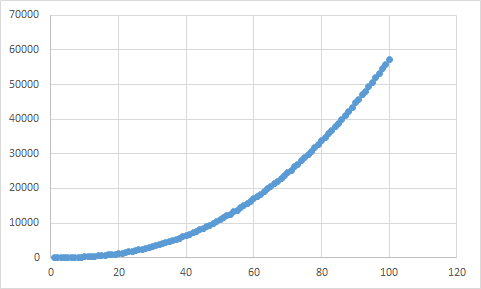}
                \caption{Plot of the summatory function of $h(D)$.}
                \label{fig:sumhd}
        \end{subfigure}
            \qquad
        \begin{subfigure}[b]{0.5\textwidth}
                \includegraphics[width=\textwidth]{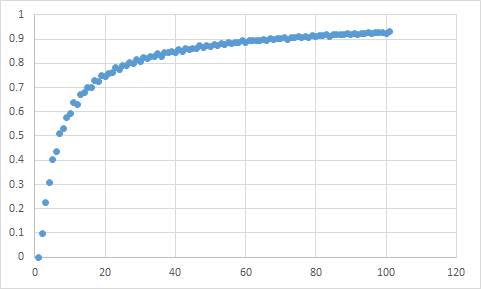}
                \caption{Ratio of the summatory function of $h(D)$ over $\dfrac{1}{4}x^2\log^2{x}$.}
                \label{fig:ratiohd}
        \end{subfigure}
        \caption{Plots related to $h(D)$.}\label{fig:hdplots}
\end{figure}
\end{proof}

%%%%%%%%%%%%%%%%%%%%%%%%%%%%%%%%%%%%%%%%%%%%%%%%%%

%%%%%%%%%%%%%%%%%%%%%%%%%%%%%%%%%%%%%%%%%%%%%%%%%%

\section{Positive linear fractional transformations and continued fractions}

Every positive rational number $\frac{a}{b}$ (usually written in lowest terms) can be expressed as
\[\dfrac{a}{b} = q_0+\dfrac{1}{q_1+\ddots +\dfrac{1}{q_{k-1}+\dfrac{1}{q_k}}}\]
where each $q_i\in\mathbb N_0$, $q_i>0$ for $i\ne 0$.
This continued fraction representation of $\frac{a}{b}$ is denoted by $[q_0,q_1,\ldots,q_k]$. Note that such a representation is not unique.

By using a procedure similar to the division algorithm for integers (see~\cite[Section 5]{N1} for an in-depth discussion), one can write any PLFT as \[\dfrac{az+b}{cz+d} = q_0+\dfrac{1}{q_1+\dfrac{1}{q_2+\ddots +\dfrac{1}{q_{k-1}+\dfrac{1}{q}}}}\] where each $q_i\in\mathbb N_0$, $q_i>0$ for $i\ne 0$, and $q:=q(z)$ is an orphan PLFT. We represent the above continued fraction of $\frac{az+b}{cz+d}$ by $[q_0,q_1,\ldots,q_{k-1},q]$. While $q$ is an orphan PLFT, it may not be orphan root of the PLFT CW-tree containing $\frac{az+b}{cz+d}$. In fact, either $q$ or $q^{-1}$ is the orphan root depending on the parity of $k$ ($q$ when $k$ is even and $q^{-1}$ otherwise).

\begin{example}\label{ex1}
Consider the PLFT $\frac{7z+8}{4z+5}$.  We have that
\[ \dfrac{7z+8}{4z+5} = 1+\dfrac{1}{1+\dfrac{1}{1+\dfrac{2z+1}{z+2}}}\\
  = \left [1,1,1,\dfrac{z+2}{2z+1}\right ].
\]
Here, $\frac{z+2}{2z+1}$ is an orphan, as it is not the left or right child of any PLFT. Furthermore, from Figure~\ref{fig:ex1tree}, we see that $\frac{2z+1}{z+2}$ is the root of the PLFT CW-tree containing $\frac{7z+8}{4z+5}$.
\end{example}

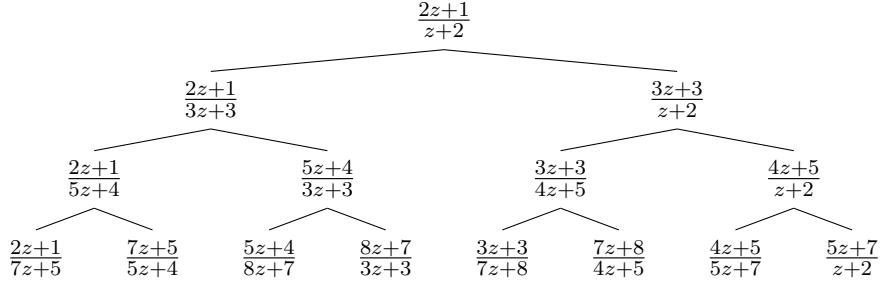
\begin{figure}[ht!]
\begin{center}
\begin{tikzpicture}[sibling distance=15pt]
\tikzset{level distance=30pt}
\Tree[.$\frac{2z+1}{z+2}$ [.$\frac{2z+1}{3z+3}$ [.$\frac{2z+1}{5z+4}$ $\frac{2z+1}{7z+5}$ $\frac{7z+5}{5z+4}$ ]
   [.$\frac{5z+4}{3z+3}$ $\frac{5z+4}{8z+7}$ $\frac{8z+7}{3z+3}$ ] ] [.$\frac{3z+3}{z+2}$ [.$\frac{3z+3}{4z+5}$ $\frac{3z+3}{7z+8}$ $\frac{7z+8}{4z+5}$ ]
   [.$\frac{4z+5}{z+2}$ $\frac{4z+5}{5z+7}$ $\frac{5z+7}{z+2}$ ] ]]
\end{tikzpicture}
\end{center}
\caption{The first four rows of $\mathcal T\left(\frac{2z+1}{z+2}\right)$.}\label{fig:ex1tree}
\end{figure}

The continued fractions of a positive rational number and its children in the Calkin-Wilf tree are closely related~\cite{HMST,N1}. A similar result holds for PLFT CW-trees.

\begin{lemma}\label{lrplft}
	Let $w$ be a PLFT with continued fraction representation $w=[q_0,q_1,\dots,q_r,q]$. Then $R(w)  = [q_0+1,q_1,\dots,q_r,q]$ and
	\begin{align*}
		L(w) &= \begin{cases}
		[0,q_1+1,\dots,q_r,q] & \text{ if }q_0=0,\\
		[0,1,q_0,q_1,\dots,q_r,q] & \text{ otherwise.}
		\end{cases}
	\end{align*}
\end{lemma}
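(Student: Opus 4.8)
The plan is to verify both identities by direct algebraic manipulation of the continued fraction, treating the orphan tail $q$ as an opaque symbol and using only the defining recursion of a continued fraction together with the explicit formulas $R(w)=w+1$ and $L(w)=\frac{w}{w+1}$. Since the statement asserts an equality of PLFTs (equivalently, of rational expressions in $z$), no appeal to uniqueness of the representation is needed; it suffices to exhibit for each child a valid continued fraction whose tail is still the orphan $q$.

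The case of $R$ is immediate. Writing $w=q_0+\cfrac{1}{q_1+\ddots}$, we have $R(w)=w+1=(q_0+1)+\cfrac{1}{q_1+\ddots}$, and since $q_0+1\in\mathbb{N}_0$ while the remaining partial quotients and the orphan tail are unchanged, this is a legitimate representation $[q_0+1,q_1,\dots,q_r,q]$. For $L$, the key identity is
$$L(w)=\frac{w}{w+1}=\cfrac{1}{1+\cfrac{1}{w}},$$
which follows from $1+\frac{1}{w}=\frac{w+1}{w}$. This reduces the problem to expanding $\frac{1}{w}$ as a continued fraction and then prepending the partial quotients $0,1$, and the leading-term structure forces a case split on $q_0$.

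If $q_0=0$, then $w=\cfrac{1}{q_1+\cfrac{1}{\ddots}}$, so $\frac{1}{w}=q_1+\cfrac{1}{\ddots}=[q_1,q_2,\dots,q_r,q]$; hence $1+\frac{1}{w}=[q_1+1,q_2,\dots,q_r,q]$, and taking the reciprocal and prepending $0$ gives $L(w)=[0,q_1+1,q_2,\dots,q_r,q]$. If instead $q_0\geq 1$, then $\frac{1}{w}=[0,q_0,q_1,\dots,q_r,q]$ directly, so $1+\frac{1}{w}=[1,q_0,q_1,\dots,q_r,q]$ and therefore $L(w)=[0,1,q_0,q_1,\dots,q_r,q]$, matching the two branches of the stated formula.

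I expect the main obstacle to be bookkeeping rather than depth: in each branch one must check that the resulting string of partial quotients meets the admissibility requirement ($q_i>0$ for $i\neq 0$) and, crucially, that the orphan $q$ remains the tail throughout, so that neither taking a reciprocal nor adding $1$ ever absorbs the tail into an adjacent integer partial quotient. Since $q_0=0$ is precisely the situation in which forming $\frac{1}{w}$ removes a level of the continued fraction, isolating that case is exactly what produces the bifurcation in the expression for $L(w)$.
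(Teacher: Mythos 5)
Your proof is correct, and there is in fact nothing in the paper to compare it against: the paper states Lemma~\ref{lrplft} without proof, presenting it as the PLFT analogue of the corresponding fact for rational numbers in the Calkin--Wilf tree (deferring to~\cite{HMST} and~\cite{N1}), so your computation supplies an argument the paper leaves implicit rather than diverging from one. Your two ingredients are exactly the natural ones: $R(w)=w+1$ merely increments $q_0$, and the identity $L(w)=\dfrac{1}{1+\frac{1}{w}}$ reduces everything to expanding $\frac{1}{w}$, with the bifurcation on $q_0$ arising because inversion strips one level of the fraction precisely when $q_0=0$. Your framing is also the right one: since the lemma asserts equalities of rational functions in $z$, it suffices to exhibit admissible representations, the orphan tail $q$ is carried along untouched by every manipulation, and no uniqueness of representations is needed. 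One small point of convention worth flagging: in the branch $q_0=0$, your expansion --- like the lemma's own formula $[0,q_1+1,\dots,q_r,q]$ --- presupposes that the integer partial quotient $q_1$ exists, i.e.\ that $r\geq 1$. If one admitted the representation $w=[0,q]$, then $w=1/q$ would itself be an orphan and a direct computation gives $L(w)=[0,1,q^{-1}]$, with the tail inverted, which matches neither branch; this degenerate case is excluded by the way the division algorithm produces representations (a zero $q_0$ forces a nonzero $q_1$ unless $w$ is itself an orphan), so it is an ambiguity in the statement of the lemma rather than a gap in your argument, but it deserves a sentence in a polished write-up.
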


In effect, Lemma~\ref{lrplft} shows that the continued fraction of a PLFT encodes its location relative to the root in its PLFT CW-tree. This result can be used to determine whether one PLFT is an ancestor of another within the same PLFT CW-tree.

\begin{example}\label{ex2}
Consider the PLFTs $\frac{7z+8}{4z+5}$, $\frac{3z+3}{4z+5}$, and $\frac{8z+7}{3z+3}$. We have that
\begin{align*}
\dfrac{7z+8}{4z+5} &= \left [1,1,1,\dfrac{z+2}{2z+1}\right ]=R\circ L\circ R\left(\frac{2z+1}{z+2}\right),\\
\dfrac{3z+3}{4z+5} &= \left [0,1,1,\dfrac{z+2}{2z+1}\right ]=L\circ R\left(\frac{2z+1}{z+2}\right),\text{ and}\\
\dfrac{8z+7}{3z+3} &= \left [2,1,\dfrac{2z+1}{z+2}\right ]=R\circ R\circ L\left(\frac{2z+1}{z+2}\right).
\end{align*}
We can clearly see that $\frac{3z+3}{4z+5}$ is an ancestor of $\frac{7z+8}{4z+5}$, but $\frac{8z+7}{3z+3}$ is not (see Figure~\ref{fig:ex1tree}). Using the original Calkin-Wilf tree, it is  easy to see the ancestor-descendant relations by noticing that $\frac{3}{4}$ and $\frac{3}{5}$ are ancestors of $\frac{7}{4}$ and $\frac{8}{5}$ respectively, but $\frac{8}{3}$ and $\frac{7}{3}$ are not (see Figure~\ref{fig:CWtree1}).
\end{example}

\begin{example}\label{ex3}
Consider the continued fraction representation of the PLFT $\frac{151z+119}{127z+100}$. A bit of work shows that
\[\frac{151z+119}{127z+100} = \bigg[1,5,3,1,\frac{3z+4}{4z+1}\bigg]\\
    = 1+\dfrac{1}{5+\dfrac{1}{3+\dfrac{1}{1+\dfrac{4z+1}{3z+4}}}}.\]
Furthermore,
\[	\frac{151}{127} = [1,5,3,2,3]
    = 1+\dfrac{1}{5+\dfrac{1}{3+\dfrac{1}{2+\dfrac{1}{3}}}}
    = 1+\dfrac{1}{5+\dfrac{1}{3+\dfrac{1}{1+\dfrac{4}{3}}}}
\]
and
\[ \frac{119}{100} = [1,5,3,1,4] = 1+\dfrac{1}{5+\dfrac{1}{3+\dfrac{1}{1+\dfrac{1}{4}}}}.
\]
\end{example}

Examples ~\ref{ex2} and ~\ref{ex3} suggest that there is a connection between the continued fractions of the rational numbers $\frac{a}{c}$ and $\frac{b}{d}$ (when $c,d\neq 0$) and the continued fraction of the PLFT $\frac{az+b}{cz+d}$. Our goal is to make this connection explicit while exploring some cases which are not as straight forward as Examples ~\ref{ex2} and ~\ref{ex3}.

Before stating some of our results, we want to establish the number of permissible zeros among the coefficients of a PLFT. In order that $ad-bc \neq 0$, there can be at most two zeros among the values $a$, $b$, $c$, and $d$. In the case where there are {\it exactly} two zeros, we have orphan PLFTs of the form $\frac{az}{d}$ or $\frac{b}{cz}$ with trivial continued fraction representations. Therefore, for the remainder of the section, we assume that at most one value among $a$, $b$, $c$, and $d$ is zero.

We begin with a useful lemma.

\begin{lemma}\label{gcdlemma}
 Let $w=\frac{az+b}{cz+d}$ be a PLFT and suppose that $L(w)=\frac{a'z+b'}{c'z+d'}$. Then $\gcd(a,c)=\gcd(a',c')$ and $\gcd(b,d)=\gcd(b',d')$. A similar result holds for $R(w)$.
\end{lemma}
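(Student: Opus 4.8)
The plan is to compute $L(w)$ explicitly using the matrix correspondence from Theorem~\ref{lftmonoid}, and then read off the new coefficients $a',b',c',d'$ in terms of $a,b,c,d$. Recall that the left child is obtained by applying $L(z)=z/(z+1)$ on the outside, which corresponds to left-multiplying the matrix $\left[\begin{smallmatrix} a & b\\ c & d\end{smallmatrix}\right]$ by $L_1=\left[\begin{smallmatrix} 1 & 0\\ 1 & 1\end{smallmatrix}\right]$. Carrying out this multiplication gives $L_1\left[\begin{smallmatrix} a & b\\ c & d\end{smallmatrix}\right]=\left[\begin{smallmatrix} a & b\\ a+c & b+d\end{smallmatrix}\right]$, so that $a'=a$, $b'=b$, $c'=a+c$, and $d'=b+d$.

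With these formulas in hand, the two gcd claims reduce to elementary facts. First I would argue that $\gcd(a,c)=\gcd(a,a+c)$: any common divisor of $a$ and $c$ divides $a+c$, and conversely any common divisor of $a$ and $a+c$ divides $(a+c)-a=c$, so the two pairs have exactly the same set of common divisors and hence the same gcd. Since $a'=a$ and $c'=a+c$, this is precisely $\gcd(a,c)=\gcd(a',c')$. The identical argument applied to the pair $(b,d)$ and $(b,b+d)=(b',d')$ yields $\gcd(b,d)=\gcd(b',d')$.

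For the analogous statement about $R(w)$, the right child corresponds to left-multiplication by $R_1=\left[\begin{smallmatrix} 1 & 1\\ 0 & 1\end{smallmatrix}\right]$, giving $R_1\left[\begin{smallmatrix} a & b\\ c & d\end{smallmatrix}\right]=\left[\begin{smallmatrix} a+c & b+d\\ c & d\end{smallmatrix}\right]$. Here the roles swap: now $c'=c$ and $a'=a+c$, while $d'=d$ and $b'=b+d$. The same divisor-preservation argument shows $\gcd(a,c)=\gcd(a+c,c)=\gcd(a',c')$ and $\gcd(b,d)=\gcd(b+d,d)=\gcd(b',d')$.

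There is essentially no hard part here; the lemma is a direct computation followed by the standard observation that the Euclidean step $\gcd(m,n)=\gcd(m,m+n)$ preserves the gcd. The only point requiring minor care is bookkeeping: one must keep straight which column-pair $(a,c)$ versus $(b,d)$ is being modified by which generator, and confirm that in each case exactly one entry of the pair is replaced by the sum while the other is fixed, so that the Euclidean identity applies cleanly. I would present the $L(w)$ case in full and remark that the $R(w)$ case is entirely symmetric.
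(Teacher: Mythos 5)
Your proposal is correct and matches the paper's own proof in substance: both compute the child's coefficients explicitly (yours via left-multiplication by $L_1$, the paper's by composing $L(z)=z/(z+1)$ directly, which are equivalent under the isomorphism of Theorem~\ref{lftmonoid}), obtaining $a'=a$, $b'=b$, $c'=a+c$, $d'=b+d$, and then apply the Euclidean identity $\gcd(m,n)=\gcd(m,m+n)$. The paper likewise treats only one gcd claim in full and dispatches the rest as ``similar,'' so your level of detail is if anything slightly more complete.
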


\begin{proof}
From the definition of $L(\cdot)$, we see that
\begin{align*}
L(w) &= \frac{az+b}{(a+c)z+(b+d)},
\end{align*}
that is $a'=a$ and $c'=a+c$. We immediately get that $\gcd(a',c')=\gcd(a,a+c)=\gcd(a,c)$, as desired. The remaining portion of the lemma can be handled in a similar fashion.
\end{proof}

We now state the main theorem in this section. While it is not the most general statement that can be made, it is versatile enough to handle any case with some slight modifications.

\begin{theorem}\label{equivtheorem}
	Let $w=\frac{az+b}{cz+d}$ be a PLFT with $c,d\neq 0$. Then the following are equivalent\footnote{Note that the following representations appearing in the theorem are not necessarily the continued fractions of either the PLFTs or rational numbers.}:
    \begin{itemize}
    \item[(a)] We have that
    \begin{align*}
		w &= q_0+\dfrac{1}{q_1+\ddots+\dfrac{1}{q_{k-1}+\dfrac{a'z+b'}{c'z+d'}}}
	\end{align*}
    with $c'\neq 0$.
	\item[(b)] We have that
    \begin{align*}
		\frac{a}{c} &= q_0+\dfrac{1}{q_1+\ddots+\dfrac{1}{q_{k-1}+\dfrac{a''}{c''}}}
	\end{align*}
	with $c''\neq 0$ and $\gcd(a'',c'')=1$, and either
	\begin{align*}
		\frac{b}{d} &= q_0+\dfrac{1}{q_1+\ddots+\dfrac{1}{q_{k-1}+\dfrac{b''}{d''}}}
    \end{align*}
    with $d''\neq 0$ and $\gcd(b'',d'')=1$, or
    \begin{align*}
        \frac{b}{d} &= q_0+\dfrac{1}{q_1+\ddots+\dfrac{1}{q_{k-2}}}.
    \end{align*}
    \item[(c)] We have that
    \begin{align*}
    \begin{bmatrix}
    a & b \\
    c & d
    \end{bmatrix} &= \begin{cases}
    R_1^{q_0}L_1^{q_1}\cdots R_1^{q_{k-1}} \begin{bmatrix}
    a' & b' \\
    c' & d'
    \end{bmatrix} & \text{ when $k$ is odd},\\[3ex]
    R_1^{q_0}L_1^{q_1}\cdots L_1^{q_{k-1}} \begin{bmatrix}
    a' & b' \\
    c' & d'
    \end{bmatrix} & \text{ otherwise}.
    \end{cases}
    \end{align*}
    \end{itemize}
\end{theorem}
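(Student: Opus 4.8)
The plan is to route all three conditions through the monoid isomorphism of Theorem~\ref{lftmonoid}, which lets us pass freely between a nested fraction, a product of the generators $R_1, L_1$, and the scalar continued fractions of the two columns of the associated matrix. The argument rests on three elementary matrix facts: adding a nonnegative integer $q$ to a PLFT corresponds to left multiplication by $R_1^{q}=\begin{bmatrix} 1 & q \\ 0 & 1 \end{bmatrix}$; forming a reciprocal corresponds to left multiplication by the swap $S=\begin{bmatrix} 0 & 1 \\ 1 & 0 \end{bmatrix}$; and these interact via the identity $S R_1^{q} S = L_1^{q}$, equivalently $S R_1^{q} = L_1^{q} S$.

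First I would establish (a) $\Leftrightarrow$ (c). Expanding the nested fraction in (a) from the outermost partial quotient inward and applying the isomorphism one step at a time produces the matrix identity
\[
\begin{bmatrix} a & b \\ c & d \end{bmatrix} = R_1^{q_0}\, S\, R_1^{q_1}\, S \cdots S\, R_1^{q_{k-1}} \begin{bmatrix} a' & b' \\ c' & d' \end{bmatrix},
\]
with exactly $k-1$ interior swaps. Using $S R_1^{q} = L_1^{q} S$ to migrate each swap rightward converts every other factor from an $R_1$-power into an $L_1$-power, leaving the alternating product $R_1^{q_0} L_1^{q_1} R_1^{q_2}\cdots$ of (c). The parity of $k$ governs two things simultaneously: which generator carries the final exponent $q_{k-1}$ (an $R_1$-power when $k$ is odd, an $L_1$-power when $k$ is even), and whether the swaps all cancel in pairs or one residual swap survives. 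Absorbing that leftover swap into the orphan tail is precisely the reflection $q \leftrightarrow q^{-1}$ flagged in the discussion preceding Example~\ref{ex1}, and it is the source of the two-case split in (c). Every step is reversible because the isomorphism is a bijection and $R_1, L_1, S$ are invertible, so this yields a genuine equivalence.

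Next I would prove (c) $\Leftrightarrow$ (b) by reading the matrix identity one column at a time. Writing $M$ for the product of $R_1$- and $L_1$-powers in (c), the two columns assert $\begin{bmatrix} a \\ c \end{bmatrix} = M \begin{bmatrix} a' \\ c' \end{bmatrix}$ and $\begin{bmatrix} b \\ d \end{bmatrix} = M \begin{bmatrix} b' \\ d' \end{bmatrix}$. Interpreting $M$ as a Möbius transformation, which realizes exactly the continued-fraction map $t \mapsto q_0 + \cfrac{1}{q_1+\ddots+\cfrac{1}{q_{k-1}+t}}$ of~(a), the first column gives $\frac{a}{c} = M\!\left(\frac{a'}{c'}\right)$ and the second gives $\frac{b}{d} = M\!\left(\frac{b'}{d'}\right)$, which are precisely the continued fractions in (b) once the tails are written in lowest terms; this reduction preserves the value of the tail and produces coprime $a'',c''$ and $b'',d''$, and $c'\neq 0$ forces $c''\neq 0$, consistent with the gcd-invariance of $R_1, L_1$ recorded in Lemma~\ref{gcdlemma}. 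The dichotomy for $\frac{b}{d}$ in (b) is controlled entirely by the second tail: when $d'\neq 0$ we land in the first alternative, whereas when $d'=0$ the value $\frac{b'}{d'}$ is infinite, the term $q_{k-1}+\infty$ collapses under the next reciprocal, and the fraction truncates to $q_0 + \cfrac{1}{q_1+\ddots+\cfrac{1}{q_{k-2}}}$, the second alternative. For the converse, building $M$ from the shared $q_i$ and recovering the tail columns via $M^{-1}$ (integral since $M\in SL_2(\mathbb{N}_0)$ has determinant $1$) reconstructs the matrix identity of (c).

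The routine parts are the three matrix facts and the column reading. The genuinely delicate point will be the swap bookkeeping in the even case: one must check that the single residual $S$ is correctly absorbed into the orphan tail so that the parity split in (c) lines up with the $q \leftrightarrow q^{-1}$ reflection, and, in tandem, that the degenerate tail $d'=0$ is exactly what produces the truncated alternative in (b) rather than breaking the correspondence. Keeping the standing assumption that at most one of $a,b,c,d$ vanishes is what keeps every Möbius evaluation and every passage to lowest terms well defined throughout.
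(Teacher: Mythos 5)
Your reduction of (a) to (c) via the swap matrix $S$ and the identity $SR_1^q = L_1^qS$ is sound (including the parity bookkeeping, where the residual swap is absorbed into the tail as the reciprocal), and your forward direction (c) $\Rightarrow$ (b) by reading off columns and passing to lowest terms is a faithful matrix rephrasing of the paper's limit argument ($z\to\infty$ for the first column, $z\to 0^+$ for the second, with $d'=0$ producing the truncated alternative). The genuine gap is in the converse, (b) $\Rightarrow$ (c)/(a), which is the substantive direction and the one the paper devotes nearly all of its proof to. Your plan there --- ``recover the tail columns via $M^{-1}$, integral since $\det M=1$'' --- establishes only that the recovered entries $a',b',c',d'$ are integers. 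It does not show they are \emph{nonnegative}, which is exactly what is needed for $\frac{a'z+b'}{c'z+d'}$ to be a PLFT, hence for (a) to hold and for the matrix in (c) to lie in $GL_2(\mathbb{N}_0)$; since $M^{-1}$ has entries of mixed sign, nonnegativity of $M^{-1}\begin{bmatrix} a & b\\ c& d\end{bmatrix}$ is not automatic. The missing step is the identification of the recovered columns with the gcd-scaled reduced tails: from (b) you know only the equality of \emph{rational numbers} $\frac{a}{c}=M\!\left(\frac{a''}{c''}\right)$, i.e.\ that $\begin{bmatrix} a\\ c\end{bmatrix}$ is proportional to $M\begin{bmatrix} a''\\ c''\end{bmatrix}$. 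You must then argue that $\gcd(a'',c'')=1$ and unimodularity of $M$ force $M\begin{bmatrix} a''\\ c''\end{bmatrix}$ to have coprime entries, so the proportionality constant is exactly $\gcd(a,c)$, whence $M^{-1}\begin{bmatrix} a\\ c\end{bmatrix}=\gcd(a,c)\begin{bmatrix} a''\\ c''\end{bmatrix}$ is nonnegative and $c'=\gcd(a,c)\,c''\neq 0$. This is precisely the role Lemma~\ref{gcdlemma} plays in the paper's proof (there phrased constructively: build $R^{q_0}\circ L^{q_1}\circ\cdots\left(\frac{a'z+b'}{c'z+d'}\right)$ with $a'=\gcd(a,c)a''$, etc., and pin it down to equal $w$ by matching limits \emph{and} gcds). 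You invoke that lemma only in the forward direction, where it is not actually needed.

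A second, related omission: in the converse you never treat the alternative in (b) where $\frac{b}{d}=[q_0,\ldots,q_{k-2}]$ is truncated. In that case there is no fraction $\frac{b''}{d''}$ to pull back through $M^{-1}$, so your recovery step has nothing to act on; one must instead exhibit the second tail column as $\begin{bmatrix}\gcd(b,d)\\ 0\end{bmatrix}$ (in the row order dictated by the parity of $k$) and verify the product. The paper handles this with a dedicated computation, reducing the case $d''=0$ to the generic case with $k-2$ in place of $k$ via the identity for $L^{q_{k-2}}\circ R^{q_{k-1}}\!\left(\frac{a'z+b'}{c'z}\right)$, and it also needs the observation that $k=1$ is impossible here because $d\neq 0$ --- a boundary fact your sketch never confronts. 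Without these two repairs, your (b) $\Rightarrow$ (c) yields only a matrix identity with an arbitrary integer tail, which makes (c) vacuous and breaks the return trip to (a).
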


\begin{proof}
(a)$\Longrightarrow$(b): We obtain the first part of (b) by noting that
\begin{align*}
\frac{a}{c} &= \lim_{z\to\infty}w = q_0+\dfrac{1}{q_1+\ddots+\dfrac{1}{q_{k-1}+\displaystyle\lim_{z\to\infty} \dfrac{a'z+b'}{c'z+d'}}}\\
&= q_0+\dfrac{1}{q_1+\ddots+\dfrac{1}{q_{k-1}+\dfrac{a'}{c'}}},
\end{align*}
which gives the desired result with $a''=\frac{a'}{\gcd(a,c)}$ and $c''=\frac{c'}{\gcd(a,c)}$.

The second half of (b) follows similarly by taking the limit as $z\to 0^+$ of $w$. When $d'\neq 0$, we obtain, again, the desired result with $b''=\frac{b'}{\gcd(b,d)}$ and $d''=\frac{d'}{\gcd(b,d)}$. If $d'=0$, then
\begin{align*}
\frac{b}{d} &= \lim_{z\to 0^+}w = q_0+\dfrac{1}{q_1+\ddots+\displaystyle\lim_{z\to 0^+}\Bigg(\dfrac{1}{q_{k-1}+ \frac{a'z+b'}{c'z}}\Bigg)}\\
&= q_0+\dfrac{1}{q_1+\ddots+\dfrac{1}{q_{k-2}}},
\end{align*}
as desired.

(b)$\Longrightarrow$(a): Before we begin this portion of the proof, we introduce a bit of notation. For any PLFT $f(z)$, let $f^m(z)=f^{m-1}\circ f(z)$ for an integer $m>0$ and $f^0(z)=f(z)$. We will make use of this notation in the case where $f(z)$ is $L(z)$ or $R(z)$.

Suppose that $k$ is odd. Let $\frac{ez+f}{gz+h}$ be the PLFT given by
\begin{align*}
\dfrac{ez+f}{gz+h} &= R^{q_0}\circ L^{q_1}\circ\cdots R^{q_{k-1}}\bigg(\dfrac{a'z+b'}{c'z+d'}\bigg).
\end{align*}
where $a'=\gcd(a,c)\cdot a''$, $c'=\gcd(a,c)\cdot c''$, $b'=\gcd(b,d)\cdot b''$, and $d'=\gcd(b,d)\cdot d''$.
By Lemma~\ref{lrplft},
\begin{align}
\dfrac{ez+f}{gz+h} &= q_0+\dfrac{1}{q_1+\ddots+\dfrac{1}{q_{k-1}+\dfrac{a'z+b'}{c'z+d'}}}.\label{expansion}
\end{align}
Note that $g\neq 0$ since $c''\neq 0$. Taking the limit of both sides of~\eqref{expansion} as $z\to\infty$ shows that $\frac{e}{g}=\frac{a}{c}$. By repeatedly applying Lemma~\ref{gcdlemma}, it follows that $\gcd(e,g)=\gcd(a',c')=\gcd(a,c)$. This immediately gives that $e=a$ and $g=c$.

In the case when $d''\neq 0$, we get that $f=b$ and $h=d$ by taking the limit of both sides of~\eqref{expansion} as $z\to 0^+$ and repeating the above argument. When $d''=0$, the situation requires some extra computations.

If $k = 1$, then $w = q_0 + \frac{a'z+b'}{c'z+d'}$, which means that, in the case where $d''=0$, it follows that $d=0$. This contradicts our initial assumption about $d$, so we must have that $k>1$. Furthermore,
\begin{align*}
L^{q_{k-2}}\circ R^{q_{k-1}}\bigg(\dfrac{a'z+b'}{c'z}\bigg) &= \dfrac{1}{q_{k-2}+\dfrac{1}{q_{k-1}+\dfrac{a'z+b'}{c'z}}}\\
%= & \dfrac{1}{q_{k-2}+\dfrac{c'z}{(a'+c'q_{k-1})z+b'}}=
&= \dfrac{(a'+c'q_{k-1})z+b'}{(c'+a'q_{k-2}+c'q_{k-1}q_{k-2})z+b'q_{k-2}}.
\end{align*}
Using the above computation, we reduce the problem to the previous case with $k-2$ (which is nonnegative) replacing $k$ and $b'q_{k-2}$ (which is not 0) replacing $d'$. Taking limits as $z\to 0^+$, as before, we get that $\frac{e}{f}=\frac{b}{d}$ and $\gcd(e,f)=\gcd(b',b'q_{k-2})=b'=\gcd(b',0)=\gcd(b',d')=\gcd(b,d)$, where the first and last equalities are given by Lemma~\ref{gcdlemma}.

A similar argument works for the case where $k$ is even. Simply apply the above argument using $k-1$ (which is odd) on $L^{q_{k-1}}\left(\frac{a'z+b'}{c'z+d'}\right)$ instead of $\frac{a'z+b'}{c'z+d'}$. This completes this portion of the proof.

(a)$\Longleftrightarrow$(c): This equivalence follows from Lemma~\ref{lrplft} and Theorem~\ref{lftmonoid}.
\end{proof}

The following example shows that the PLFT $\frac{a'z+b'}{c'z+d'}$ in part (a) of Theorem~\ref{equivtheorem} is not unique or necessarily the orphan root associated with $\frac{az+b}{cz+d}$.

\begin{example}\label{ex4}
We have that
\[
  \dfrac{43}{30}=\left [1,2,3,4\right ]=1+\dfrac{1}{2+\dfrac{1}{3+\dfrac{1}{4}}}
\]
and
\[
  \dfrac{10}{7}=\left [1,2,3\right ]=1+\dfrac{1}{2+\dfrac{1}{3}}=1+\dfrac{1}{2+\dfrac{1}{3+\dfrac{0}{1}}}.
\]
Taking $k=3$ in Theorem~\ref{equivtheorem} part (b), and noting that $\gcd(43,30)=\gcd(10,7)=1$, it follows that
\[
  \dfrac{43z+10}{30z+7}=1+\dfrac{1}{2+\dfrac{1}{3+\dfrac{z}{4z+1}}}.
\]
however $\dfrac{z}{4z+1}$ is not a PLFT orphan.
Alternatively, taking $k=4$ in Theorem~\ref{equivtheorem} part (b), it follows that
\[
  \dfrac{43z+10}{30z+7}=1+\dfrac{1}{2+\dfrac{1}{3+\dfrac{1}{4+\dfrac{1}{z}}}}=\left [1,2,3,4,z\right ].
\]
where $z$ is a PLFT orphan and the orphan root associated with $\dfrac{43z+10}{30z+7}$.
\end{example}

Theorem~\ref{equivtheorem} assumes that $c$ and $d$ are nonzero. If this is not the case, then we apply the theorem to the PLFT $\frac{cz+d}{az+b}$ instead.

\begin{example}\label{ex5}
We have that
\[
  \dfrac{5}{7}=0+\dfrac{1}{1+\dfrac{1}{2+\dfrac{1}{2}}}=\left [0,1,2,2\right ].
\]
So
\[
  \dfrac{7z+1}{5z} = \dfrac{1}{\dfrac{5z}{7z+1}} = \dfrac{1}{0+\dfrac{1}{1+\dfrac{2z+1}{5z}}}\\
  = \left [1,\dfrac{2z+1}{5z}\right ],
\]
where we have applied Theorem~\ref{equivtheorem} to the PLFT $\dfrac{5z}{7z+1}$.
\end{example}

Theorem~\ref{equivtheorem} also assumes that the continued fraction of $\frac{a}{c}$ is ``longer" than that of $\frac{b}{d}$. If this is not the case, then we apply the theorem to the PLFT $\frac{bz+a}{dz+c}$ instead and recover the original PLFT by the change of variables $z\mapsto \frac{1}{z}$.

\begin{example}\label{ex6}
Using Example~\ref{ex4}, we see that, by letting $y=1/z$,
\[
\dfrac{10z+43}{7z+30} = \dfrac{43y+10}{30y+7} = 1+\dfrac{1}{2+\dfrac{1}{3+\dfrac{1}{4+\dfrac{1}{y}}}} =\left [1,2,3,4,\frac{1}{z}\right ].
\]
\end{example}

All of the examples given so far have been selected with $\gcd(a,c)=\gcd(b,d)=1$. This need not always be the case. Given two {\it distinct} PLFTs $\frac{az+b}{cz+d}$ and $\frac{ez+f}{gz+h}$ with $\frac{a}{c}=\frac{e}{g}$ and $\frac{b}{d}=\frac{f}{h}$, we expect their continued fractions to be different even though the continued fractions of $\frac{a}{c}$ and $\frac{e}{g}$, as well as those of $\frac{b}{d}$ and $\frac{f}{h}$, are identical. Lemma~\ref{gcdlemma} accounts for this potential difference and shows that the only modification needed for the non-relatively prime case is to adjust the values of the relatively prime case in a simple way.

\begin{example}\label{ex7}
We have that
\[
  \dfrac{86}{60}=\dfrac{43}{30}=\left [1,2,3,4\right ]
\qquad
\text{and}
\qquad
  \dfrac{30}{21}=\dfrac{10}{7}=\left [1,2,3\right ].
\]
Taking $k=4$ in Theorem~\ref{equivtheorem} part (b), and noting that $\gcd(86,60)=2$ and $\gcd(30,21)=3$, it follows that
\[
  \dfrac{86z+30}{60z+21}=1+\dfrac{1}{2+\dfrac{1}{3+\dfrac{1}{4+\dfrac{3}{2z}}}}=\left [1,2,3,4,\dfrac{2z}{3}\right ].
\]
\end{example}

In~\cite{HMST}, we found explicit conditions for a rational number to be the descendant of another rational number in the Calkin-Wilf tree based on their continued fractions. We describe the conditions below and provide the continued fractions of the ancestors of a rational number. We will make use of Proposition~\ref{ADR} (part (c) in particular) in Theorem~\ref{orphanroot} when selecting ancestors of given rational numbers.

\begin{proposition}[Descendant Conditions]\label{ADR}
Suppose that $w$ and $w'$ are positive rational numbers with continued fraction representations
$w=[q_0,q_1,\ldots,$ $q_r]$ and $w'=[p_0,p_1,\ldots,p_s]$. Then the following statements are equivalent:
\begin{enumerate}
\item[(a)] $w'$ is a descendant of $w$ in the Calkin-Wilf tree;
\item[(b)]  $s\ge r$, $2\mid (s-r)$,  $p_{s-r+i}=q_i$ for $2\le i \le r$, and
\[\begin{cases}
p_{s-r}\ge q_0 \text{ and } p_{s-r+1}=q_1 & \text{ if } q_0\ne 0,\\
p_{s-r+1}\ge q_1 & \text{ otherwise};
\end{cases}\]
%%%%%%%%%%%%%
\item[(c)]  $w^{(-1)^j}=[k,p_{j+1},\ldots,p_s]$ for $j,k\in\mathbb{N}$, $0\leq k < p_j$, $0 \leq j \leq s-1$.
\end{enumerate}
\end{proposition}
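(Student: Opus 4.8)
The plan is to prove the three conditions equivalent by first understanding how a single step up the Calkin-Wilf tree (passing from a vertex to its parent) acts on continued fractions, and then iterating. Specializing Lemma~\ref{lrplft} to rational numbers (dropping the orphan term $q$) records the effect of the child maps $R(z)=z+1$ and $L(z)=z/(z+1)$, and inverting these relations gives the parent map. I would also record the key inversion symmetry of the tree: since $R(1/z)=(z+1)/z=1/L(z)$, the map $z\mapsto 1/z$ is an automorphism of the Calkin-Wilf tree that interchanges left and right children. In particular, $w'$ is a descendant of $w$ if and only if $1/w'$ is a descendant of $1/w$, which lets me convert every ``undo an $L$'' step into an ``undo an $R$'' step after an inversion.

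The core of the argument is the equivalence (a)$\Longleftrightarrow$(c), which I would obtain by explicitly listing the ancestor chain of $w'=[p_0,p_1,\dots,p_s]$. Starting from $w'$ with $p_0\ge 1$, undoing right-steps decrements the leading partial quotient, producing the ancestors $[k,p_1,\dots,p_s]$ for $0\le k<p_0$; these are exactly the $j=0$ ancestors of (c). Undoing $p_0$ right-steps reaches $[0,p_1,\dots,p_s]$, which is a left child, so inverting it to $[p_1,\dots,p_s]$ turns the subsequent left-steps into right-steps, and the next batch of ancestors are the numbers $w$ with $1/w=[k,p_2,\dots,p_s]$ and $0\le k<p_1$, i.e.\ the $j=1$ ancestors. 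Iterating, phase $j$ strips one further partial quotient and flips the orientation by one additional inversion, yielding precisely the ancestors $w$ with $w^{(-1)^j}=[k,p_{j+1},\dots,p_s]$ and $0\le k<p_j$ for $0\le j\le s-1$. Since every proper ancestor arises in exactly one phase, this establishes (a)$\Longleftrightarrow$(c), and simultaneously produces the explicit continued fractions of the ancestors.

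To pass between (c) and the index-matching form (b), I would compare the continued fraction $w=[q_0,\dots,q_r]$ against the tail of $w'$ determined by the phase. Writing $w^{(-1)^j}=[k,p_{j+1},\dots,p_s]$ and expanding $w$ (or $1/w$, according to the parity of $j$) shows that the partial quotients of $w$ coincide with $p_{s-r+i}=q_i$ for the interior indices $2\le i\le r$, while the two leading quotients are governed by the inequality on $k$; a short computation with the identities $1/[q_0,q_1,\dots]=[0,q_0,q_1,\dots]$ when $q_0\ge 1$ and $1/[0,q_1,\dots]=[q_1,\dots]$ pins down the case split in (b) according to whether $q_0=0$. The same computation forces $s-r$ to be even in every case (for instance $s-r=j$ when $j$ is even, while the odd case gives $s-r=j\pm 1$ depending on whether $q_0=0$), which is exactly the parity condition $2\mid(s-r)$.

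The main obstacle is the careful boundary bookkeeping at the phase transitions. Because each rational has two continued fraction representations (namely $[\dots,q_r]=[\dots,q_r-1,1]$), one must fix a convention and track how it interacts with the inversions and with the $q_0=0$ versus $q_0\ge 1$ dichotomy; getting these right is what produces the exact inequalities $0\le k<p_j$ and the precise leading-quotient conditions in (b), and what guarantees that the ancestor list is neither over- nor under-counted (in particular, that the root is reached). Isolating a single clean statement of the parent operation at the outset, together with the inversion symmetry, is what keeps this bookkeeping manageable.
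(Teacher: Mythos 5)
A preliminary remark: the paper never proves Proposition~\ref{ADR} --- it is imported from~\cite{HMST} (``In~\cite{HMST}, we found explicit conditions\ldots''), so there is no internal proof to measure your attempt against; what follows judges your plan on its own merits.

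Your overall route is the natural and correct one: specialize Lemma~\ref{lrplft} to rationals to get the child maps on continued fractions, use the fact that $z\mapsto 1/z$ is an automorphism of the Calkin--Wilf tree interchanging left and right children (valid, since $1/L(z)=(z+1)/z=R(1/z)$ and the root $1$ is fixed), and enumerate the ancestor chain in phases; that enumeration is precisely condition (c). The gap is that the ``boundary bookkeeping'' you set aside is not routine tidying --- it is exactly where the equivalence can fail, and your proposal never commits to the convention that makes it true. Concretely, take $w'=\frac{5}{2}$ with the representation $[2,2]$. Your phases produce only $[0,2]=\frac{1}{2}$ and $[1,2]=\frac{3}{2}$, and since (c) caps $j$ at $s-1=0$ there are no further phases; yet the root $1$ is an ancestor of $\frac{5}{2}$, so (a) holds while (c) fails for this representation. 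The equivalence becomes true only with the long representation $[2,1,1]$, where phase $j=1$, $k=0$ gives $w^{-1}=[0,1]=1$. In general, when $p_s\geq 2$ the phase enumeration stops $p_s-1$ ancestors short of the root, so your induction must be run with the representation of $w'$ ending in $1$, and you must prove (not merely assert) that with that choice the phases are exhaustive and produce no non-ancestors. A second place where the conventions carry real content: (b) permits the equality $p_{s-r}=q_0$ (e.g.\ $w=[1,2]=\frac{3}{2}$ and $w'=[3,4,1,2]=\frac{45}{14}$, where $p_2=q_0=1$ and $\frac{3}{2}$ is indeed an ancestor), but in (c) this ancestor appears only in phase $j=s-r-1$, with $k=0$ and with the \emph{inverted} representation $[0,q_0,q_1,\ldots,q_r]$ standing for $w^{(-1)^j}$; so the step (b)$\Leftrightarrow$(c) is not plain index matching but a case analysis in which both the phase parity and the representation of $w$ change. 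None of this defeats your approach --- the identities $1/[q_0,q_1,\ldots]=[0,q_0,q_1,\ldots]$ (for $q_0\geq 1$) and $1/[0,q_1,\ldots]=[q_1,\ldots]$ that you invoke are exactly the needed tools --- but as written the proposal defers the one part of the argument where all the delicacy lives, and without fixing the representations the statement you would end up proving is simply false.
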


In order to obtain the orphan root of a PLFT using Theorem~\ref{equivtheorem}, the values of $k$ and $q_{k-1}$ must be maximized. This is done by first selecting $k$ as large as possible and then (with $k$ fixed) selecting $q_{k-1}$ as large as possible. In some cases, the largest value of $k$ is obtained by considering alternative forms of the continued fraction representations of $\frac{a}{c}$ and $\frac{b}{d}$. Since we are maximizing over a finite set of choices, we can always attain the maximum and find the orphan root. Any pair of representations that allows for such a maximization will be referred to as an optimal pair. We summarize the above discussion explicitly in Theorem~\ref{orphanroot} below.

\begin{theorem}\label{orphanroot}
Let $w=\frac{az+b}{cz+d}$ be a PLFT with $c,d\neq 0$ and suppose that $\frac{a}{c} = [q_0,q_1,\ldots,q_r]$ and $\frac{b}{d} = [q'_0,q'_1,\ldots,q'_s]$ form an optimal pair of continued fraction representations. Furthermore, assume that $2\leq s\leq r$ and that $q_s>q'_s$ if $r=s$.  Let $k$ be the largest integer such that $2\leq k\leq s+2$, and $q_i=q'_i$, for $i=0,1,\ldots,k-2$. Then there exists a positive integer $p$ such that
$$\dfrac{az+b}{cz+d}=\left[q_0,q_1,\ldots,q_{k-2},p,\left(\dfrac{a'z+b'}{c'z+d'}\right)^{-1}\right]$$
where $\frac{a'z+b'}{c'z+d'}$ is a PLFT orphan.

 \begin{itemize}
    \item[(a)] If $k\leq s+1$, then $p=\min(q_{k-1},q'_{k-1})$, and the orphan root of $w$ is $\left(\frac{a'z+b'}{c'z+d'}\right)^{(-1)^{k-1}}$
where $\frac{a'}{c'}=[q_{k-1}-p,q_k,\dots,q_r]$ with $\gcd(a',c')=\gcd(a,c)$, and
$\frac{b'}{d'}=[q'_{k-1}-p,q'_k,\dots,q'_s]$ with $\gcd(b',d')=\gcd(b,d)$ if $k<s+1$ or $b'=0$ and $d'=\gcd(b,d)$ if $k=s+1$.

  \item[(b)] If  $k=s+2$, then $p=q_{s+1}$ and the orphan root of $w$ is $\left(\frac{a'z+\gcd(b,d)}{c'z}\right)^{(-1)^{s+1}}$ where $\frac{a'}{c'}=[q_{s+2},\ldots,q_r]$.
\end{itemize}

\end{theorem}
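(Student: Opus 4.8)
\textbf{Proof proposal for Theorem~\ref{orphanroot}.}

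The plan is to read the theorem as a direct application of Theorem~\ref{equivtheorem} (part (b)) combined with the maximization discussion preceding the statement, with Proposition~\ref{ADR}(c) used to verify that the resulting PLFT is genuinely an orphan. First I would record the setup: since $\frac ac=[q_0,\dots,q_r]$ and $\frac bd=[q'_0,\dots,q'_s]$ form an optimal pair and agree in their first $k-1$ partial quotients $q_0,\dots,q_{k-2}$, these are exactly the common $q_0,\dots,q_{k-2}$ that Theorem~\ref{equivtheorem}(b) requires. Applying that theorem with the common head $q_0,\dots,q_{k-2}$ and a suitably chosen final quotient produces a representation
\begin{align*}
w &= \left[q_0,q_1,\dots,q_{k-2},p,\frac{a'z+b'}{c'z+d'}\right],
\end{align*}
and the parity remark from Section~3 (that $q$ or $q^{-1}$ is the orphan root according to the parity of the length) is what converts this into the stated form with $\left(\frac{a'z+b'}{c'z+d'}\right)^{-1}$ and the exponent $(-1)^{k-1}$. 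So the skeleton of the argument is: choose $p$, read off $\frac{a'}{c'}$ and $\frac{b'}{d'}$ from the tails of the two continued fractions, and then argue optimality.

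Next I would treat the two cases separately. In case (a), where $k\le s+1$, both continued fractions still have a $(k-1)$-st quotient, and I would set $p=\min(q_{k-1},q'_{k-1})$; peeling off the common prefix $[q_0,\dots,q_{k-2},p]$ leaves tails $[q_{k-1}-p,q_k,\dots,q_r]$ and $[q'_{k-1}-p,q'_k,\dots,q'_s]$, which by Theorem~\ref{equivtheorem}(b) give $\frac{a'}{c'}$ and $\frac{b'}{d'}$ respectively, with the gcd bookkeeping $\gcd(a',c')=\gcd(a,c)$ and $\gcd(b',d')=\gcd(b,d)$ supplied verbatim by Lemma~\ref{gcdlemma} (applied repeatedly along the $L_1,R_1$ product, exactly as in the proof of Theorem~\ref{equivtheorem}). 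The sub-case $k=s+1$ is where $\frac bd$ has run out of quotients, so its tail is empty and one falls into the second alternative of Theorem~\ref{equivtheorem}(b); this forces $b'=0$ and $d'=\gcd(b,d)$, matching the statement. In case (b), where $k=s+2$, the $b/d$ side has been fully consumed one step earlier, $p=q_{s+1}$, and the orphan takes the degenerate shape $\frac{a'z+\gcd(b,d)}{c'z}$ with $\frac{a'}{c'}=[q_{s+2},\dots,q_r]$; here I would lean on the $d'=0$ computation already carried out inside the proof of Theorem~\ref{equivtheorem} rather than redo it.

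The genuinely substantive step—and the one I expect to be the main obstacle—is justifying that this particular $p$ (and the underlying maximal $k$) really yields the \emph{orphan root}, not merely some ancestor, i.e.\ that $\frac{a'z+b'}{c'z+d'}$ cannot be pushed any further toward the root. This is exactly the point of maximizing $k$ first and then $q_{k-1}$, and it is where Proposition~\ref{ADR}(c) enters: I would argue that if the resulting PLFT were \emph{not} an orphan, then by Lemma~\ref{lrplft} its continued fraction could be shortened by one $L$ or $R$ step, which would translate (via the correspondence between the PLFT's quotients and the simultaneous quotients of $\frac ac$ and $\frac bd$) into either a larger admissible $k$ or a larger admissible final quotient, contradicting the maximality built into the choice of the optimal pair. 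Making this contradiction airtight requires care with the boundary behavior—the parity of $k$, the degenerate tails when $b'=0$, and the fact that $q_s>q'_s$ when $r=s$ is precisely what guarantees $\frac ac$ is the ``longer'' side so that Theorem~\ref{equivtheorem} applies without first swapping to $\frac{bz+a}{dz+c}$. I would close by noting that since the maximization ranges over a finite set of representations of the optimal pair, the maximum is attained, so such a $p$ exists, completing the proof.
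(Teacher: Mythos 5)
Your construction of the representation itself tracks the paper's first proof closely: you peel off the common prefix $[q_0,\ldots,q_{k-2}]$ and the quotient $p$, convert the two tails of $\frac{a}{c}$ and $\frac{b}{d}$ into the PLFT $\frac{a'z+b'}{c'z+d'}$ via Theorem~\ref{equivtheorem}(b), do the gcd bookkeeping with Lemma~\ref{gcdlemma}, handle $k=s+1$ and $k=s+2$ through the ``shorter continued fraction'' alternative of that theorem, and use the parity remark of Section~3 to pass from the last entry of the representation to the root with exponent $(-1)^{k-1}$. All of that is exactly what the paper does.

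The gap is the step you yourself flag as the main obstacle: you never actually prove that $\frac{a'z+b'}{c'z+d'}$ is an orphan. You propose a contradiction argument (if it were a child, Lemma~\ref{lrplft} would let you lengthen the common prefix or enlarge the final quotient, contradicting optimality), but you do not carry it out, and as sketched it has two defects. First, Proposition~\ref{ADR}(c) is about descendants of rational numbers in the classical Calkin--Wilf tree; it does not certify orphan-ness of a PLFT, so it is not the tool that closes this step. Second, since ``optimal pair'' is defined only informally (a pair of representations attaining the maximization), appealing to ``the maximality built into the optimal pair'' is close to circular unless you explicitly manufacture, from the hypothetical parent, valid continued fraction representations of $\frac{a}{c}$ and $\frac{b}{d}$ with a longer common prefix or a larger $(k-1)$-st quotient (via the limit arguments of Theorem~\ref{equivtheorem}(a)$\Rightarrow$(b)); this is never done. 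The paper needs no contradiction at all: maximality of $k$ forces $q_{k-1}\neq q'_{k-1}$ whenever $k\leq s+1$, so with $p=\min(q_{k-1},q'_{k-1})$ exactly one of the tails $[q_{k-1}-p,q_k,\ldots,q_r]$ and $[q'_{k-1}-p,q'_k,\ldots,q'_s]$ has leading quotient $0$. Hence one of $\frac{a'}{c'}$, $\frac{b'}{d'}$ is less than $1$ and the other greater than $1$ (in the degenerate cases $k=s+1$ and $k=s+2$ one has $b'=0$, resp.\ $d'=0$, and the same conclusion holds), and this is precisely Nathanson's coefficient characterization of orphans quoted in the introduction: either $a'<c'$ and $b'>d'$, or $a'>c'$ and $b'<d'$. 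That single observation, combined with the Section~3 parity remark, finishes the proof; without it, or a completed version of your contradiction, your proposal asserts rather than proves the one genuinely substantive claim of the theorem.
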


We present two proofs of Theorem~\ref{orphanroot}. The first proof below makes use of Theorem~\ref{equivtheorem}. The second proof, appearing after Corollary~\ref{rootzmatrix}, establishes the same result from a matrix perspective.

\begin{proof}[First Proof of Theorem~\ref{orphanroot}]
Suppose that $k\leq s+1$. Then
\[
	\frac{a}{c} = q_0+\dfrac{1}{q_1+\ddots+\dfrac{1}{q_{r-1}+\dfrac{1}{q_r}}}\\
    = q_0+\dfrac{1}{q_1+\ddots+\dfrac{1}{q_{k-2}+\dfrac{1}{p+\dfrac{a''}{c''}}}}
\]
and, if $k<s$,
\[
    \frac{b}{d} = q'_0+\dfrac{1}{q'_1+\ddots+\dfrac{1}{q'_{s-1}+\dfrac{1}{q'_s}}}\\
    = q_0+\dfrac{1}{q_1+\ddots+\dfrac{1}{q_{k-2}+\dfrac{1}{p+\dfrac{b''}{d''}}}}
\]
where $\frac{a''}{c''}=[q_{k-1}-p,q_k,\ldots,q_r]$ with $\gcd(a'',c'')=1$, and
$\frac{b''}{d''}=[q'_{k-1}-p,q'_k,\dots,q'_s]$ with $\gcd(b',d')=1$. Using Lemma~\ref{gcdlemma} and Theorem~\ref{equivtheorem} (b), this implies that
\begin{align*}
    w &= q_0+\dfrac{1}{q_1+\ddots+\dfrac{1}{q_{k-2}+\dfrac{1}{p+\dfrac{a'z+b'}{c'z+d'}}}}.
\end{align*}
By the definition of $p$, out of the two fractions $\frac{a'}{c'}$ and $\frac{d'}{b'}$, one must be greater than 1 and one must be smaller than 1. So $\left(\frac{c'z+d'}{a'z+b'}\right)^{(-1)^{k-1}}$ is the orphan root of $w$. This gives the desired continued fraction representation of $w$ when $k<s+1$. When $k=s+1$, the above argument works with $b''=0$.

When $k=s+2$, we see that
\[
	\frac{a}{c} = q_0+\dfrac{1}{q_1+\ddots+\dfrac{1}{q_{r-1}+\dfrac{1}{q_r}}}\\
    = q_0+\dfrac{1}{q_1+\ddots+\dfrac{1}{q_{k-2}+\dfrac{1}{q_{k-1}+\dfrac{a''}{c''}}}}
\]
and
\[
    \frac{b}{d} = q'_0+\dfrac{1}{q'_1+\ddots+\dfrac{1}{q'_{s-1}+\dfrac{1}{q'_s}}}\\
    = q_0+\dfrac{1}{q_1+\ddots+\dfrac{1}{q_{k-2}}}
\]
Again, by Lemma~\ref{gcdlemma} and Theorem~\ref{equivtheorem} (b) (in the case where the continued fraction of $\frac{b}{d}$ is ``shorter"), this implies that
\begin{align*}
    w &= q_0+\dfrac{1}{q_1+\ddots+\dfrac{1}{q_{k-2}+\dfrac{1}{q_{k-1}+\dfrac{a'z+\gcd(b,d)}{c'z}}}},
\end{align*}
as desired.
\end{proof}

\begin{example}\label{ex8}
We have that
\[
    \dfrac{27}{19}=[1,2,2,1,2]
\qquad
\text{and}
\qquad
    \dfrac{10}{7}=[1,2,3].
\]
Taking $k=3$ and $q_2=2$ in Theorem~\ref{equivtheorem} part (b), it follows that
\[
    \dfrac{27z+10}{19z+7}=1+\dfrac{1}{2+\dfrac{1}{2+\dfrac{2z+1}{3z+1}}}.
\]
Despite the fact that we have taken $k$ and $q_{k-1}$ to be as large as possible given the above continued fractions of $\frac{27}{19}$ and $\frac{10}{7}$, the PLFT $\frac{2z+1}{3z+1}$ is {\bf not} an orphan. However, if we consider alternatively
\[
    \dfrac{10}{7}=[1,2,2,1],
\]
taking $k=5$, we obtain the orphan root $\frac{1}{z}$, and the continued fraction
\[
    \dfrac{27z+10}{19z+7}=\left[1,2,2,1,2,z\right ].
\]
\end{example}

Theorem~\ref{orphanroot} implies the following result in the case where $w$ is a PLFT with $ad-bc=\pm 1$.

\begin{corollary}\label{rootz}
Let $w=\frac{az+b}{cz+d}$ be a PLFT with $c,d\neq 0$ and $ad-bc=\pm 1$ such that $\frac{a}{c}  = [q_0,q_1,\ldots,q_{s+1}]$ and $\frac{b}{d} = [q_0,q_1,\ldots,q_s]$. Then
$$\dfrac{az+b}{cz+d}=[q_0,q_1,\ldots,q_{s+1},z]$$
whose orphan root is $z^{(-1)^s}$.

\end{corollary}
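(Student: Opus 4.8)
The plan is to obtain Corollary~\ref{rootz} as the degenerate case $r=s+1$ of part (b) of Theorem~\ref{orphanroot}. First I would check that the hypotheses of that theorem are met and that we fall into case (b). Since $ad-bc=\pm1$, any common divisor of $a$ and $c$ divides $ad-bc=\pm1$, so $\gcd(a,c)=1$; the same argument gives $\gcd(b,d)=1$. Thus $a/c$ and $b/d$ are presented in lowest terms by the convergents $[q_0,\dots,q_{s+1}]$ and $[q_0,\dots,q_s]$, which in the notation of Theorem~\ref{orphanroot} means $r=s+1$ and $q_i'=q_i$ for $0\le i\le s$. Because $r=s+1>s$, the side condition ``$q_s>q_s'$ if $r=s$'' is vacuous, and for $s\ge2$ the theorem applies verbatim (the finitely many cases $s<2$ are checked directly).

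Next I would determine $k$. As the two expansions agree in every position $0,1,\dots,s$, the largest $k$ with $2\le k\le s+2$ and $q_i=q_i'$ for $0\le i\le k-2$ is exactly $k=s+2$. This is the maximal admissible value of $k$, so the given representations automatically form an optimal pair and we are in case (b) with $p=q_{s+1}$. Theorem~\ref{orphanroot} then yields $w=[q_0,\dots,q_{s+1},(\text{orphan})^{-1}]$, and it only remains to identify the orphan $\frac{a'z+b'}{c'z+d'}$.

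The one genuinely delicate point -- and the step I expect to be the main obstacle -- is that case (b) describes the orphan through $a'/c'=[q_{s+2},\dots,q_r]$, which is an \emph{empty} continued fraction when $r=s+1$. Rather than reading the formula off verbatim, I would return to the mechanism behind case (b): peeling $q_0,\dots,q_{s+1}$ off of $a/c=[q_0,\dots,q_{s+1}]$ leaves the remainder $a''/c''$, and comparing with the exact equality $a/c=[q_0,\dots,q_{s+1}]$ forces $a''/c''=0$, i.e.\ $a''=0$ and $c''=1$. Since $\gcd(a,c)=1$, this gives $a'=0$ and $c'=1$, so the orphan $\frac{a'z+\gcd(b,d)}{c'z}$ collapses (using $\gcd(b,d)=1$) to $\frac{1}{z}$.

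Finally I would assemble the statement. The orphan root predicted by case (b) is $\left(\frac1z\right)^{(-1)^{s+1}}$, which equals $z^{(-1)^s}$ since inverting $\frac1z$ gives $z$; likewise the last bracket entry $(\text{orphan})^{-1}=\left(\frac1z\right)^{-1}=z$ produces $w=[q_0,\dots,q_{s+1},z]$, as claimed. As an independent check -- and an alternative route that sidesteps the optimal-pair and $s\ge2$ bookkeeping -- I would feed $k=s+2$, $a''=0$, $c''=1$ directly into the implication (b)$\Longrightarrow$(a) of Theorem~\ref{equivtheorem} (its $d''=0$ sub-case), which reconstructs $w$ from $\frac1z$ and confirms that $z$ (respectively $1/z$) is a bona fide PLFT orphan.
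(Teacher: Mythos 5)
Your proposal is correct and takes essentially the same route as the paper: the paper's entire proof is the one-line assertion that the corollary ``follows immediately from case (b) in Theorem~\ref{orphanroot},'' which is exactly the reduction you carry out. Your extra bookkeeping---$\gcd(a,c)=\gcd(b,d)=1$ from $ad-bc=\pm1$, the identification $k=s+2$, reading the empty continued fraction $[q_{s+2},\ldots,q_r]$ as $a'/c'=0/1$ so the orphan collapses to $\frac{1}{z}$, and the sign check $\left(\frac{1}{z}\right)^{(-1)^{s+1}}=z^{(-1)^s}$---just makes explicit the details the paper leaves to the reader.
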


\begin{proof}
The corollary follows immediately from case (b) in Theorem~\ref{orphanroot}.
\end{proof}

By translating Corollary~\ref{rootz} into the setting for matrices (using part (c) of Theorem~\ref{equivtheorem}), we get the following result.
\begin{corollary}\label{rootzmatrix}
The matrix $M=\begin{bmatrix} a & b \\
c & d
\end{bmatrix}$ is in the monoid generated by $L_1$ and $R_1$  if and only if $M\in\{I_2,R_1,L_1,R_1^2,L_1^2,\dots\}$ or $\frac{a}{c}=[q_0,q_1,\ldots,q_{s+1}]$ and $\frac{b}{d}=[q_0,q_1,\ldots,q_s]$ with $ad-bc=\pm 1$. Furthermore, in the latter case,
\begin{align*}
M =  \begin{cases}
    R_1^{q_0}L_1^{q_1}\cdots R_1^{q_r}\text{ when $r$ is odd},\\
    R_1^{q_0}L_1^{q_1}\cdots L_1^{q_r}\text{ otherwise}.
    \end{cases}
\end{align*}
\end{corollary}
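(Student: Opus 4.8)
The plan is to translate the problem entirely into the monoid $\langle L_1,R_1\rangle$ and exploit that, by Theorem~\ref{lftmonoidfree}, this monoid is precisely $SL_2(\mathbb{N}_0)$; equivalently, a PLFT lies in it exactly when it is special, i.e.\ when its orphan root is $z\leftrightarrow I_2$. Thus ``$M$ is in the monoid'' is the same as ``$\det M=1$ and the entries are nonnegative integers,'' and the whole statement becomes a continued-fraction test for this condition. First I would dispose of the degenerate entries: among nonnegative matrices of determinant one, $a=0$ or $d=0$ is impossible (it forces $\det M=-bc\le 0$), while $c=0$ forces $a=d=1$ and hence $M=R_1^{b}$, and $b=0$ forces $M=L_1^{c}$. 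This shows that the monoid elements with a vanishing off-diagonal entry are exactly the set $\{I_2,R_1,L_1,R_1^2,L_1^2,\dots\}$, matching the first alternative of the statement; for all remaining monoid elements we have $a,b,c,d\ge 1$, so $\tfrac ac$ and $\tfrac bd$ are genuine positive rationals.

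For the forward direction in the nondegenerate case, a monoid element with $b,c\ge1$ is a word in both generators, which I would write in maximal alternating-block form $R_1^{q_0}L_1^{q_1}\cdots$. By Lemma~\ref{lrplft} (or directly by the monoid isomorphism together with part (c) of Theorem~\ref{equivtheorem}, using the trailing orphan matrix $I_2$), this word is the matrix of the PLFT whose continued fraction is $[q_0,\dots,q_r,z]$. Passing to the limits $z\to\infty$ and $z\to0^{+}$ exactly as in the implication (a)$\Rightarrow$(b) of Theorem~\ref{equivtheorem} then yields $\tfrac ac=[q_0,\dots,q_r]$ and $\tfrac bd=[q_0,\dots,q_{r-1}]$: two continued fractions sharing a common prefix and differing in length by one. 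Setting $s=r-1$ recovers the indexing of the statement, and since $M$ is a product of determinant-one generators we get $ad-bc=1$.

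For the converse and the explicit product, I would start from the continued-fraction hypothesis and invoke Corollary~\ref{rootz}, which gives $w=\tfrac{az+b}{cz+d}=[q_0,\dots,q_{s+1},z]$; translating this representation into matrices by part (c) of Theorem~\ref{equivtheorem} (again with trailing orphan matrix $I_2$) exhibits $M$ as the alternating word $R_1^{q_0}L_1^{q_1}\cdots$, which visibly lies in the monoid. The closed form in the ``furthermore'' clause is then just this word read off block by block: the exponent $q_i$ carries $R_1$ when $i$ is even and $L_1$ when $i$ is odd, so the terminal block attached to $q_r$ is governed by the parity of $r$.

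The delicate point, and the step I expect to require the most care, is the parity and determinant bookkeeping that glues the two directions together. One has to track a single parity through three avatars: the length parameter $s$ of the shorter continued fraction, the index $r=s+1$ of the last block, and the parameter $k=s+2$ governing the odd/even split in part (c) of Theorem~\ref{equivtheorem}. This same parity controls the sign $ad-bc=(-1)^{s}$ arising from the determinant identity for consecutive convergents, and it must be reconciled with the monoid's requirement that the determinant equal $+1$; keeping these alignments straight, and matching them to whether the terminal generator is $L_1$ or $R_1$, is where the argument is easiest to slip. The only other genuine care is in the degenerate cases, where $\tfrac ac$ or $\tfrac bd$ is undefined and the continued-fraction machinery must be bypassed in favor of the direct computation above.
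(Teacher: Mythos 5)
Your skeleton is essentially the paper's: the converse is exactly the paper's one-line derivation (Corollary~\ref{rootz} pushed through part (c) of Theorem~\ref{equivtheorem}), and your two additions---the reduction of membership in $\langle L_1,R_1\rangle$ to ``$\det M=1$ with nonnegative entries'' via Theorem~\ref{lftmonoidfree}, and the clean disposal of matrices with a zero entry---are correct and make explicit things the paper glosses over, notably the ``only if'' direction, which a citation of Corollary~\ref{rootz} (an implication running the other way) cannot supply by itself.

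The genuine gap is the parity bookkeeping that you flag as ``delicate'' but never actually perform; as written, the concrete intermediate claims on which both directions rest are false. In the forward direction you assert that the alternating word $R_1^{q_0}L_1^{q_1}\cdots$ is the matrix of the PLFT with continued fraction $[q_0,\dots,q_r,z]$, so that the limits $z\to\infty$ and $z\to 0^+$ give $\frac{a}{c}=[q_0,\dots,q_r]$ and $\frac{b}{d}=[q_0,\dots,q_{r-1}]$. That holds only when the word ends in $L_1$; when it ends in $R_1$ the trailing orphan is $1/z$ and the two limits exchange roles. For $M=L_1R_1=\begin{bmatrix}1&1\\1&2\end{bmatrix}$ one gets $\frac{a}{c}=1=[0,1]$ and $\frac{b}{d}=\frac{1}{2}=[0,1,1]$: the \emph{longer} expansion belongs to $\frac{b}{d}$, and no choice of representations puts this monoid element in the form your argument (or the printed statement) demands. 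Symmetrically, in the converse you translate $[q_0,\dots,q_{s+1},z]$ ``with trailing orphan matrix $I_2$,'' but the orphan in that expansion, read through Theorem~\ref{equivtheorem}(a), is $1/z$, whose matrix is the row swap $\begin{bmatrix}0&1\\1&0\end{bmatrix}$; it is absorbed into a word in $L_1,R_1$ only when $s$ is even. When $s$ is odd---e.g.\ $\frac{a}{c}=\frac{3}{2}=[1,1,1]$, $\frac{b}{d}=\frac{2}{1}=[1,1]$, so $M=\begin{bmatrix}3&2\\2&1\end{bmatrix}$ with $ad-bc=-1$---the continued-fraction hypothesis with $ad-bc=\pm1$ holds, yet $M$ is not in the monoid, as your own $SL_2(\mathbb{N}_0)$ reduction shows (every word in $L_1,R_1$ has determinant $+1$). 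So the sign and the which-fraction-is-longer questions are not reconcilable bookkeeping to be deferred: they are exactly where both implications break, and carrying them out forces a repaired statement (allow the roles of $\frac{a}{c}$ and $\frac{b}{d}$ to interchange according to the final letter of the word, and replace $\pm1$ by $+1$). To be fair, these defects are inherited from the corollary as printed---note also that your correct alternation rule ($q_i$ carries $R_1$ for even $i$, $L_1$ for odd $i$) contradicts the printed ``furthermore'' clause, which attaches $R_1^{q_r}$ to odd $r$---but a proof must either prove the corrected statement or fail, and the proposal as written does neither.
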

In other words, the continued fractions of $\frac{a}{c}$ and $\frac{b}{d}$ (when appropriate) encode the decomposition of a matrix $M=\begin{bmatrix} a & b \\
c & d
\end{bmatrix}$ with determinant 1 as a product of positive powers of $L_1$ and $R_1$. This is a known result (see~\cite[Section 2]{B}).

Before we give the second proof of Theorem~\ref{orphanroot}, we need to clarify the notation in the proof. Theorem~\ref{lftmonoid} shows that we can associate any PLFT $\frac{az+b}{cz+d}$ with the matrix $\begin{bmatrix} a & b\\c & d \end{bmatrix}$ and we can compute its descendants via matrix multiplication. A similar idea can be applied to rational numbers. Instead of associating a matrix to the rational number $\frac{a}{b}$, we can associate it with the vector $\begin{bmatrix} a\\b \end{bmatrix}$ (see~\cite{HMST} for details).

\begin{proof}[Second Proof of Theorem~\ref{orphanroot}] Suppose $k\leq s+1$. If $k$ is odd,
$\frac{a}{c}$ is associated to $$R_1^{q_0}L_1^{q_1}\cdots L_1^{q_{k-2}} R_1^{p} \begin{bmatrix} a'\\c' \end{bmatrix}$$ where $\frac{a'}{c'}=[q_{k-1}-p,q_k,\ldots,q_r]$ with $\gcd(a',c')=\gcd(a,c)$, and
 $\frac{b}{d}$ is associated to $$R_1^{q_0}L_1^{q_1}\cdots L_1^{q_{k-2}}R_1^{p} \begin{bmatrix} b'\\d' \end{bmatrix}$$ where $\frac{b'}{d'}=[q'_{k-1}-p,q'_k,\dots,q'_s]$ with $\gcd(b',d')=\gcd(b,d)$ if $k<s+1$ or $b'=0$ and $d'=\gcd(b,d)$ if $k=s+1$. This implies that $R_1^{q_0}L_1^{q_1}\cdots L_1^{q_{k-2}} R_1^{p} \begin{bmatrix} a' & b'\\c' & d'\end{bmatrix}$. By Theorem~\ref{equivtheorem}, it follows that
$$\dfrac{az+b}{cz+d}=\left[q_0,q_1,\ldots,q_{k-2},p,\left(\dfrac{a'z+b'}{c'z+d'}\right)^{-1}\right].$$ The proof is similar in the case where $k$ is even.

Suppose that $k=s+2$. Note that $k=s+2$ implies $r\neq s$, otherwise $ad-bc=0$. If $k$ is odd, then
$\frac{a}{c}$ is associated to
              $$R_1^{q_0}L_1^{q_1}\cdots R_1^{q_{s+1}} \begin{bmatrix} a'\\c' \end{bmatrix}$$
where $\frac{a'}{c'}=[q_{s+2},\ldots,q_r]$ with $\gcd(a',c')=\gcd(a,c)$, and
$\frac{b}{d}$ is associated to
$$R_1^{q_0}L_1^{q_1}\cdots L_1^{q_s} \begin{bmatrix} \gcd(b,d)\\0 \end{bmatrix}
= R_1^{q_0}L_1^{q_1}\cdots L_1^{q_s}R_1^{q_{s+1}}\begin{bmatrix} \gcd(b,d)\\0 \end{bmatrix}$$ using the fact that $R_1^m \begin{bmatrix} n\\0 \end{bmatrix}= \begin{bmatrix} n\\0 \end{bmatrix}$ for any positive integers $m$ and $n$. This implies that $R_1^{q_0}L_1^{q_1}\cdots R_1^{q_{s+1}} \begin{bmatrix} a' & \gcd(b,d)\\c' & 0\end{bmatrix}$. Again, by Theorem~\ref{equivtheorem}, it follows that
$$\dfrac{az+b}{cz+d}=\left[q_0,q_1,\ldots,q_{s-1},p,\left(\dfrac{a'z+\gcd(b,d)}{c'z}\right)^{-1}\right].$$ The proof is similar in the case where $k$ is even.
\end{proof}

%%%%%%%%%%%%%%%%%%%%%%%%%%%%%%%%%%%%%%%%%%%%%%%%%%

%%%%%%%%%%%%%%%%%%%%%%%%%%%%%%%%%%%%%%%%%%%%%%%%%%

\section{Complex $(u,v)$-Calkin-Wilf Trees}

So far we have considered PLFTs simply as functions and little attention has been devoted to their domain. In this section, we consider the case where $z$ is a special kind of complex number.

For any complex number $z$, let $\Re(z)$ and $\Im(z)$ represent the real and imaginary parts of $z$, respectively, and let $\mathcal{D}_0 = \{z\in\mathbb{C}:\Re(z)>0,\Im(z)>0\}$. Nathanson~\cite{N2} considers the complex Calkin-Wilf trees associated with complex roots in $\mathcal{D}_0$ using the matrices
\[ L_u:= \begin{bmatrix}
1 & 0\\
u & 1
\end{bmatrix}\text{ and }
R_v:=   \begin{bmatrix}
1 & v\\
0 & 1
\end{bmatrix},\]
where $u$ and $v$ are positive integers, to generate descendants. (Note that $L_u=L_1^u$ and  $R_v=R_1^v$.) This leads to the creation of an infinite forest of complex numbers associated with each pair $(u,v)$. As a word of caution, it is not immediately obvious that Nathanson's generalization of the Calkin-Wilf tree leads to a forest. Some justification for this fact is required (see~\cite[Theorem 2]{N2} for details).

One common property seen in various generalizations of the Calkin-Wilf tree~\cite{qCW, HMST,K,MS} is that every element appearing in a tree always has a finite number of ancestors. The goal of this article is to extend this notion to the above forest of complex numbers associated with the pair $(u,v)$. Note that the restriction to elements in $\mathcal{D}_0$ is crucial here. Without such a restriction, every element would have an infinite number of ancestors.

Given a pair $(u,v)$, if $w\in\mathcal{D}_0$ has no ancestors in its (uniquely) associated complex Calkin-Wilf tree, then we say that $w$ is a complex $(u,v)$-orphan. We begin with a characterization of the set of complex $(u,v)$-orphans due to Nathanson\footnote{The following proof of Theorem~\ref{thm1} is very similar to Nathanson's proof and was done independently by the authors after learning about the result. We include it for completeness.}~\cite{N2}.

\begin{theorem}[Nathanson, \cite{N2}]\label{thm1}
Let $\mathcal{D}_{u,v}$ be the set of complex $(u,v)$-orphans. Then $$\mathcal{D}_{u,v}=\{z\in\mathcal{D}_0:\Re(z)\leq v, |2uz-1|\geq 1\}.$$
\end{theorem}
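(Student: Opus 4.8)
The plan is to characterize the complex $(u,v)$-orphans directly: an element $w \in \mathcal{D}_0$ is an orphan precisely when it has no parent inside $\mathcal{D}_0$. Since descendants are generated by applying the matrices $L_u = L_1^u$ and $R_v = R_1^v$, the parent of $w$ (if it exists) is obtained by applying one of the two inverse maps. Recalling that a matrix $\begin{bmatrix} a & b \\ c & d \end{bmatrix}$ acts as the PLFT $z \mapsto \frac{az+b}{cz+d}$, the inverse operations correspond to $L_u^{-1}(w) = \frac{w}{1 - uw}$ and $R_v^{-1}(w) = w - v$. So $w$ is an orphan if and only if \emph{neither} of these two candidate parents lies in $\mathcal{D}_0$. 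The whole proof reduces to translating the two conditions ``$R_v^{-1}(w) \notin \mathcal{D}_0$'' and ``$L_u^{-1}(w) \notin \mathcal{D}_0$'' into the inequalities $\Re(z) \le v$ and $|2uz - 1| \ge 1$.

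First I would handle the $R_v$ direction. Writing $w = x + iy$ with $x, y > 0$ (since $w \in \mathcal{D}_0$), the right-parent is $R_v^{-1}(w) = (x - v) + iy$. Its imaginary part is $y > 0$ automatically, so this candidate parent lies in $\mathcal{D}_0$ exactly when $x - v > 0$, i.e. $\Re(w) > v$. Hence $w$ has no right-parent in $\mathcal{D}_0$ if and only if $\Re(w) \le v$. This gives the first defining inequality cleanly. I should also check that the right-parent, when it exists, actually sits in the same complex tree with the correct branching, but Nathanson's forest structure (cited via \cite[Theorem 2]{N2}) guarantees uniqueness of the parent, so I only need to verify membership in $\mathcal{D}_0$.

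The more delicate computation is the $L_u$ direction. Here $L_u^{-1}(w) = \frac{w}{1 - uw}$, and I would compute both its real and imaginary parts. Setting $w = x+iy$, one finds $1 - uw = (1 - ux) - iuy$, so $L_u^{-1}(w) = \frac{(x+iy)((1-ux)+iuy)}{(1-ux)^2 + (uy)^2}$. The denominator is positive, so the sign of the real and imaginary parts of $L_u^{-1}(w)$ is governed by the numerator. A short calculation shows the imaginary part of the numerator is $y(1-ux) + xuy = y > 0$, so the imaginary part of the left-parent is always positive. Thus the left-parent lies in $\mathcal{D}_0$ precisely when its \emph{real} part is positive, and the real part of the numerator works out to $x(1-ux) - uy^2 = x - u(x^2+y^2) = x - u|w|^2$. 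So the left-parent is in $\mathcal{D}_0$ iff $x - u|w|^2 > 0$, i.e. $u|w|^2 < \Re(w)$. The orphan condition (no left-parent in $\mathcal{D}_0$) is therefore $u|w|^2 \ge \Re(w)$, which I must show is equivalent to $|2uw - 1| \ge 1$. Expanding $|2uw-1|^2 = 4u^2|w|^2 - 4u\Re(w) + 1$, the inequality $|2uw-1|^2 \ge 1$ becomes $4u^2|w|^2 \ge 4u\Re(w)$, i.e. exactly $u|w|^2 \ge \Re(w)$ after dividing by $4u > 0$.

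The main obstacle is bookkeeping rather than conceptual: I must be careful to compute the real and imaginary parts of $\frac{w}{1-uw}$ correctly and to confirm that the imaginary parts of both candidate parents are automatically positive, so that the only binding constraint in each case is on the real part. Once that is established, combining the two orphan conditions $\Re(w) \le v$ and $u|w|^2 \ge \Re(w) \iff |2uw-1| \ge 1$ gives exactly $\mathcal{D}_{u,v} = \{z \in \mathcal{D}_0 : \Re(z) \le v,\ |2uz-1| \ge 1\}$, as claimed. A minor subtlety worth a remark is the boundary cases (equality in either inequality), where the candidate parent lands on $\partial\mathcal{D}_0$ rather than inside $\mathcal{D}_0$; since the definition of orphan requires no ancestor \emph{in} $\mathcal{D}_0$, these boundary elements are correctly classified as orphans, which is why the inequalities in the statement are non-strict.
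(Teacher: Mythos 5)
Your proof is correct and follows essentially the same route as the paper: both arguments rule out the right-parent $z-v$ via $\Re(z)\le v$ and compute $(L^u)^{-1}(z)=\frac{z}{1-uz}$, observing that its imaginary part is automatically positive so that the only binding constraint, $x(1-ux)-uy^2\le 0$, is exactly $|2uz-1|\ge 1$. The only cosmetic difference is that you expand $|2uz-1|^2$ directly while the paper completes the square to exhibit the circle of radius $\frac{1}{2u}$ centered at $\frac{1}{2u}$.
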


\begin{proof}
Suppose that $z=x+iy$ is a complex $(u,v)$-orphan. If $\Re(z)> v$, then $z$ is the right child of $z-v$. This is a contradiction, so $\Re(z)\leq v$. It remains to show that $|2uz-1|\geq 1$.

Let $w=(L^u)^{-1}(z)$. A straightforward calculation shows that
\begin{align}
w &=\frac{1}{(1-ux)^2+(uy)^2}\Big(x(1-ux)-uy^2+i y\Big).\label{left}
\end{align}
In other words, $z$ is a left child unless $w\notin\mathcal{D}_0$. That is, we must have that $x(1-ux)-uy^2\leq 0$. It follows that
\begin{align*}
x(1-ux)-uy^2 & \leq 0\\
x(ux-1)+uy^2 & \geq 0\\
ux^2-x+uy^2 & \geq 0\\
x^2-\frac{1}{u}x+y^2 & \geq 0.
\end{align*}
By completing the square for $x$,
\begin{align*}
\Big(x-\frac{1}{2u}\Big)^2+y^2 & \geq \frac{1}{4u^2}.
\end{align*}
So $z$ lies on or outside of the circle centered at $\frac{1}{2u}$ of radius $\frac{1}{2u}$. In particular, $\big|z-\frac{1}{2u}\big|\geq \frac{1}{2u}$, from which the desired result follows. (See Figure~\ref{fig:cuvorph} for a graphical representation of $\mathcal{D}_{u,v}$.)
\end{proof}

\begin{figure}[ht!]
\centering
\begin{tikzpicture}[scale=1.1]
    %Draw axis
    \coordinate (y) at (0,3);
    \coordinate (x) at (4,0);
    \fill[gray!50] (0,0) -- (0,3) -- (3,3) -- (3,0) -- cycle;
    \draw[<->] (y) -- (0,0) --  (x);
    \draw (1,0) -- (1,-0.15) node [below] {$\frac{1}{2u}$};
    \draw (2,0) -- (2,-0.15) node [below] {$\frac{1}{u}$};
    \draw (3,3) -- (3,-0.15) node [below] {$v$};
    \draw (-0.15,0) node [left] {$0$} -- (0,0);
    \draw (0,-0.15) node [below] {$0$} -- (0,0);
    \draw[fill=white] (2,0) arc [radius=1, start angle=0, end angle= 180];
  \end{tikzpicture}
  \caption{The set $\mathcal{D}_{u,v}$.}
  \label{fig:cuvorph}
\end{figure}
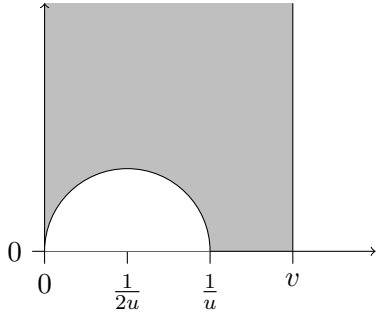

Let $\mathcal{D}_1 = \{z\in\mathcal{D}_0:|2uz-1|< 1\}$. That is, $\mathcal{D}_1$ represents the set of elements in $\mathcal{D}_0$ that are the left child of some other element in $\mathcal{D}_0$. The next result implies that there cannot be an infinite sequence of elements $\{z_n\}_{n=1}^\infty$ with $z_n\in\mathcal{D}_1$ and $(L^u)^{-1}(z_n)=z_{n+1}$ for all $n\geq 1$. Less formally, one cannot have an infinite sequence of ancestors all of which are left children.

\begin{theorem}\label{thm2}
Let $0 < y_0 \leq \frac{1}{2u}$ and $z\in\mathcal{D}_1$ be such that $\Im(z)\geq y_0$. Then $$\Im\big((L^u)^{-1}(z)\big)-\Im(z)\geq \epsilon_u(y_0)>0,$$ where $\epsilon_u(y)=\frac{2y}{1+\sqrt{1-4u^2y^2}}-y.$
\end{theorem}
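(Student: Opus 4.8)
The plan is to work directly with the formula~\eqref{left} for $w=(L^u)^{-1}(z)$ and extract its imaginary part. Writing $z=x+iy$, equation~\eqref{left} gives
\[
\Im(w) = \frac{y}{(1-ux)^2+(uy)^2}.
\]
So the quantity to bound is
\[
\Im(w)-\Im(z) = y\left(\frac{1}{(1-ux)^2+(uy)^2}-1\right).
\]
The first step is to observe that since $z\in\mathcal{D}_1$, we have $|2uz-1|<1$, which after completing the square (reversing the computation in the proof of Theorem~\ref{thm1}) is equivalent to $(1-ux)^2+(uy)^2 < 1-ux+ \text{(lower order)}$; more directly, $|2uz-1|<1$ forces the denominator $(1-ux)^2+(uy)^2$ to be strictly less than $1$, so the factor in parentheses is positive and the difference $\Im(w)-\Im(z)$ is indeed positive. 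This handles the strict positivity claim; the real content is the \emph{uniform} lower bound $\epsilon_u(y_0)$.

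Next I would reduce the two-variable problem in $(x,y)$ to a one-variable problem in $y$ alone. For fixed $y$, the difference $\Im(w)-\Im(z)=y\big((1-ux)^2+(uy)^2\big)^{-1}-y$ is smallest when the denominator $(1-ux)^2+(uy)^2$ is largest. Over the admissible range of $x$ (those $x$ with $x+iy\in\mathcal{D}_1$, i.e. $(1-ux)^2+(uy)^2<1$), the supremum of the denominator approaches $1$, but it is attained (as a worst case on the boundary $|2uz-1|=1$) precisely when $(1-ux)^2+(uy)^2=1$. The key algebraic step is to solve this boundary equation for the denominator's behavior: on the circle $|2uz-1|=1$, setting $(1-ux)^2 = 1-(uy)^2$ and substituting back shows that the extremal value of $\Im(w)$ as a function of $y$ is $\frac{2y}{1+\sqrt{1-4u^2y^2}}$. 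I expect this simplification to come from rationalizing, since $\frac{y}{1-(uy)^2\text{-type expression}}$ naturally produces the conjugate factor $1+\sqrt{1-4u^2y^2}$ in the denominator. This yields exactly
\[
\Im(w)-\Im(z)\geq \frac{2y}{1+\sqrt{1-4u^2y^2}}-y = \epsilon_u(y).
\]

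Finally I would show the bound is genuinely uniform in $y$ for $y\geq y_0$, i.e. that $\epsilon_u(y)\geq \epsilon_u(y_0)>0$. This requires checking that $\epsilon_u(y)$ is \emph{increasing} in $y$ on the interval $(0,\tfrac{1}{2u}]$, so that its minimum over $y\geq y_0$ occurs at the left endpoint $y_0$. A clean way is to rewrite $\epsilon_u(y)=\frac{y(1-\sqrt{1-4u^2y^2})}{1+\sqrt{1-4u^2y^2}}\cdot\frac{1}{1}$ after rationalizing, or more simply to observe $\frac{2y}{1+\sqrt{1-4u^2y^2}}$ is increasing because both $y$ increases and the denominator $1+\sqrt{1-4u^2y^2}$ decreases as $y$ grows toward $\tfrac{1}{2u}$; subtracting the linear term $y$ and verifying monotonicity of the difference is a short derivative computation. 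The constraint $0<y_0\leq \tfrac{1}{2u}$ guarantees $1-4u^2y^2\geq 0$ so the square root is real throughout. \textbf{The main obstacle} I anticipate is the reduction step: justifying rigorously that the worst case over $x$ lies on the boundary circle $|2uz-1|=1$ and that substituting the boundary relation produces exactly the stated closed form $\epsilon_u(y)$, rather than merely a lower bound of the right order. Everything else is routine calculus and algebra.
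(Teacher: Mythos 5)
Your overall plan has the same skeleton as the paper's proof: extract $\Im(w)-\Im(z)=\frac{y}{(1-ux)^2+(uy)^2}-y$ from~\eqref{left}, minimize over $x$ for fixed $y$ by pushing $x$ to the boundary of the admissible region, evaluate there to get $\epsilon_u(y)$, and finish by showing $\epsilon_u$ is increasing on $\big(0,\tfrac{1}{2u}\big]$. But the central reduction --- the step you yourself flag as the main obstacle --- is carried out with the wrong disc, and as written it yields only the trivial bound $0$. You describe the admissible $x$ as those with $(1-ux)^2+(uy)^2<1$; that is the disc $|uz-1|<1$, which strictly contains $\mathcal{D}_1=\{z\in\mathcal{D}_0:|2uz-1|<1\}$. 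If the admissible region really were $|uz-1|<1$, then the supremum of the denominator would indeed be $1$ (as you assert), and your own reduction would give $\Im(w)-\Im(z)\geq y\left(\frac{1}{1}-1\right)=0$: no uniform bound at all. Likewise, your boundary relation $(1-ux)^2=1-(uy)^2$, substituted into the denominator, gives exactly $1$ and hence $\Im(w)=y$; it cannot produce the claimed value $\frac{2y}{1+\sqrt{1-4u^2y^2}}$. The closed form you want is inconsistent with the relations you wrote down.

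The correct computation uses the boundary of $\mathcal{D}_1$, namely the circle $|2uz-1|=1$, on which $(1-2ux)^2=1-4u^2y^2$. Since the denominator $(1-ux)^2+(uy)^2$ is strictly decreasing in $x$ for $x<\frac{1}{u}$ (the paper verifies this by computing $f'_{u,y}(x)>0$), and the admissible interval lies inside $\big(0,\frac{1}{u}\big)$, its maximum over that interval is attained at the left endpoint $x_{u,y}=\frac{1}{2u}-\sqrt{\frac{1}{4u^2}-y^2}$, where $1-2ux=+\sqrt{1-4u^2y^2}$. Writing $s=\sqrt{1-4u^2y^2}$, at this point $ux=\frac{1-s}{2}$, so the denominator equals $\frac{(1+s)^2}{4}+\frac{1-s^2}{4}=\frac{1+s}{2}$, which is strictly less than $1$; this gap below $1$ is exactly where the uniform bound comes from, and it gives $\Im(w)-\Im(z)\geq \frac{2y}{1+s}-y=\epsilon_u(y)$. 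Your final step is fine: the rewriting $\epsilon_u(y)=\frac{y\,(1-\sqrt{1-4u^2y^2})}{1+\sqrt{1-4u^2y^2}}$ exhibits $\epsilon_u$ as a product and quotient of positive monotone factors on $\big(0,\tfrac{1}{2u}\big]$, so $\epsilon_u(y)\geq\epsilon_u(y_0)$; this is, if anything, cleaner than the paper's direct computation of $\epsilon'_u$. So the architecture of your argument matches the paper, but the key evaluation must be redone with the correct disc and the correct (smaller) root on its boundary.
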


\begin{proof}
As in Theorem~\ref{thm1}, suppose that $z=x+iy$ and let $w=(L^u)^{-1}(z)$. It follows from \eqref{left} that $$\Im(w)-\Im(z)=\frac{y}{(1-ux)^2+(uy)^2}-y.$$ Let $f_{u,y}(x)=\frac{y}{(1-ux)^2+(uy)^2}-y$. Then $$f'_{u,y}(x)=\frac{2uy(1-ux)}{[(1-ux)^2+(uy)^2]^2}.$$ In particular, $f'_{u,y}(x)>0$  for $x<\frac{1}{u}$, which clearly holds in this case since $|2uz-1|\leq 1$ and $y>0$. This shows that, for a fixed $y$ value, $f_{u,y}(x)$ is minimized when $x$ is as small as possible. Finding the location of the desired minimum is equivalent to determining the smaller $x$-value of the two points of intersection of the horizontal line of all complex numbers with imaginary part $y$ and the circle of radius $\frac{1}{2u}$ around $\frac{1}{2u}$. A simple computation shows that this occurs at $$x_{u,y}=\frac{1}{2u}-\sqrt{\frac{1}{4u^2}-y^2}.$$ Note that $x_{u,y}$ is a real number since we have that $0< y\leq\frac{1}{2u}$ and that $f_{u,y}(x_{u,y})=\epsilon_u(y)$.

To complete the proof, it is therefore enough to show that $\epsilon_u(y)\geq \epsilon_u(y_0)$. Differentiating $\epsilon_u(y)$ with respect to $y$, we see that $$\epsilon'_u(y)=\frac{2}{1-4u^2y^2+\sqrt{1-4u^2y^2}}-1.$$ Since $\epsilon'_u(y)>0$ for $0< y<\frac{1}{2u}$ and $\epsilon'_{u,v}(y)\to\infty$ as $y\to\frac{1}{2u}^-$, it follows that $\epsilon_u(y)$ is minimized at $y=y_0$.
\end{proof}

We now obtain the desired result.

\begin{corollary}
Every $z\in\mathcal{D}_0$ is the descendant of a complex $(u,v)$-orphan.
\end{corollary}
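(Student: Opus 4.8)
The plan is to reduce the statement to the two theorems just proved, treating the ancestor chain of a given $z\in\mathcal{D}_0$ as a sequence and showing it must terminate at an orphan. Given $z\in\mathcal{D}_0$, I would build its chain of ancestors as follows: at each step, if the current element $w$ is not an orphan, then by Theorem~\ref{thm1} it fails one of the two orphan conditions, so either $\Re(w)>v$ (and $w$ is a right child of $w-v\in\mathcal{D}_0$) or $|2uw-1|<1$, i.e. $w\in\mathcal{D}_1$ (and $w$ is a left child of $(L^u)^{-1}(w)\in\mathcal{D}_0$). Replacing $w$ by its parent and iterating produces a (finite or infinite) descending chain of ancestors, all lying in $\mathcal{D}_0$; the claim is equivalent to showing this chain must terminate, and its last element is then by construction an orphan of which $z$ is a descendant.

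The core of the argument is to rule out an infinite chain, and here I would split according to the two move types. First I would observe that the imaginary part is invariant under the right-child inverse $w\mapsto w-v$ and nondecreasing overall, while the real part strictly decreases by $v$ at each right-move; since every element must stay in $\mathcal{D}_0$ (hence have positive real part), only finitely many consecutive right-moves can occur before a left-move is forced or an orphan is reached. So an infinite chain would have to contain infinitely many left-moves. Next I would apply Theorem~\ref{thm2}: each left-move $w\mapsto(L^u)^{-1}(w)$ with $w\in\mathcal{D}_1$ strictly increases the imaginary part, and moreover, if I fix $y_0=\Im(z)$ (the imaginary part of the original element, which is a lower bound for the imaginary part of every ancestor since $\Im$ never decreases along the chain), each such left-move increases $\Im$ by at least the fixed positive quantity $\epsilon_u(y_0)>0$.

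The finishing move is a boundedness-versus-growth contradiction. Every ancestor $w$ in the chain lies in $\mathcal{D}_{u,v}$'s ambient strip $\{0<\Re(w)\le v\}$ only \emph{after} the final orphan is reached, so instead I bound $\Im$ directly: any $w\in\mathcal{D}_1$ satisfies $|2uw-1|<1$, which forces $\Im(w)<\frac{1}{2u}$, and this bound propagates to every element in the chain that is itself a left child; combined with the fact that $\Im$ only increases along the chain, the imaginary parts of all ancestors are trapped below $\frac{1}{2u}$. But infinitely many left-moves would drive $\Im$ up by at least $\epsilon_u(y_0)$ each time, eventually exceeding $\frac{1}{2u}$ (indeed exceeding any bound), a contradiction. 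Hence the chain has only finitely many left-moves and, by the earlier remark, finitely many right-moves between them, so it terminates; its terminal element is a complex $(u,v)$-orphan whose descendant is $z$.

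The step I expect to be the main obstacle is making the termination bookkeeping fully rigorous: one must confirm that every element produced along the chain genuinely remains in $\mathcal{D}_0$ (so that Theorems~\ref{thm1} and~\ref{thm2} actually apply at each stage), and one must be careful that the uniform lower bound $\epsilon_u(y_0)$ from Theorem~\ref{thm2} is legitimately applied with the \emph{same} $y_0=\Im(z)$ at every left-move — which is exactly what the monotonicity of $\Im$ along the chain guarantees. Once these two points are nailed down, the growth-versus-bound contradiction closes the argument cleanly.
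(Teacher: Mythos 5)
Your proof is correct and takes essentially the same route as the paper's: assume an infinite ancestor chain in $\mathcal{D}_0$, observe that right-moves alone would force $\Re\to-\infty$, so infinitely many left-moves must occur, then apply Theorem~\ref{thm2} with a fixed $y_0$ to push $\Im$ past the bound $\frac{1}{2u}$ that membership in $\mathcal{D}_1$ imposes. The only cosmetic difference is your choice $y_0=\Im(z)$ versus the paper's $y_0=\Im(z_{i_0})$ (the first ancestor lying in $\mathcal{D}_1$); these coincide anyway, since right-moves preserve the imaginary part.
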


\begin{proof}
Suppose that there is a $z\in\mathcal{D}_0$ this is not the descendant of a $(u,v)$-orphan. That is, assume that $z$ has infinitely many ancestors $z=z_0,z_1,z_2,\dots$, all in $\mathcal{D}_0$, where either $z_{i+1}=(L^u)^{-1}(z_i)$ or $z_{i+1}=(R^v)^{-1}(z_i)$ for $i\geq 0$. If $z_i\not\in\mathcal{D}_1$ for all sufficiently large $i$, then $\lim_{i\to\infty}\Re(z_i)=-\infty$, a contradiction. So there is an infinite subsequence $i_k$, $k\geq 0$, so that  $z_{i_k}\in\mathcal{D}_1$. Using induction, it follows from Theorem~\ref{thm2} that  $\Im(z_{i_k})-\Im(z_{i_0})\geq k\epsilon_u(\Im(z_{i_0}))$. So $z_{i_k}\not\in\mathcal{D}_1$ for all sufficiently large $k$, a contradiction.
\end{proof}

\section*{Acknowledgements}

The second author was partially supported by PSC-CUNY Awards \# 67111-00 45 and \#68121-00 46, jointly funded by The Professional Staff Congress and The City University of New York.

%%%%%%%%%%%%%%%%%%%%%%%%%%%%%%%%%%%%%%%%%%%%%%%%%%

\end{document}